\def\@tocline#1#2#3#4#5#6#7{\relax
  \ifnum #1>\c@tocdepth % then omit
  \else
    \par \addpenalty\@secpenalty\addvspace{#2}%
    \begingroup \hyphenpenalty\@M
    \@ifempty{#4}{%
      \@tempdima\csname r@tocindent\number#1\endcsname\relax
    }{%
      \@tempdima#4\relax
    }%
    \parindent\z@ \leftskip#3\relax \advance\leftskip\@tempdima\relax
    \rightskip\@pnumwidth plus4em \parfillskip-\@pnumwidth
    #5\leavevmode\hskip-\@tempdima
      \ifcase #1
       \or\or \hskip 1em \or \hskip 2em \else \hskip 3em \fi%
      #6\nobreak\relax
      \dotfill
      \hbox to\@pnumwidth{\@tocpagenum{#7}}
    \par
    \nobreak
    \endgroup
  \fi}
\theoremstyle{plain}
\newtheorem{theorem}{Theorem}[section]
\newtheorem{lemma}[theorem]{Lemma}
\newtheorem{proposition}[theorem]{Proposition}
\theoremstyle{remark}
\newtheorem{remark}[theorem]{Remark}
\numberwithin{equation}{section}
\newcommand{\ii}{\infty}
\newcommand\R{{\ensuremath {\mathbb R} }}
\newcommand\nn{\nonumber}
\renewcommand\phi{\varphi}
\newcommand{\gH}{\mathfrak{H}}
\newcommand{\wto}{\rightharpoonup}
\newcommand{\cQ}{\mathcal{Q}}
\newcommand{\cE}{\mathcal{E}}
\newcommand{\cF}{\mathcal{F}}
\newcommand{\cL}{\mathcal{L}}
\newcommand{\eps}{\epsilon}
\renewcommand{\epsilon}{\varepsilon}
\newcommand\pscal[1]{{\ensuremath{\left\langle #1 \right\rangle}}}
\newcommand{\norm}[1]{ \left| \! \left| #1 \right| \! \right| }
\DeclareMathOperator{\tr}{{\rm Tr}}
\DeclareMathOperator{\Tr}{{\rm Tr}}
\renewcommand{\ge}{\geqslant}
\renewcommand{\le}{\leqslant}
\renewcommand{\geq}{\geqslant}
\renewcommand{\leq}{\leqslant}
\renewcommand{\hat}{\widehat}
\renewcommand{\tilde}{\widetilde}
\title[Blow-up of rotating 2D focusing Bose gases]{Blow-up profile of rotating 2D focusing Bose gases}
\author[M. Lewin]{Mathieu LEWIN}
\address{CNRS \& CEREMADE, Universit\'e Paris-Dauphine, PSL Research University, F-75016 Paris, France} 
\email{mathieu.lewin@math.cnrs.fr}
\author[P.~T. Nam]{Phan Th\`anh NAM}
\address{Ludwig Maximilian University of Munich, Department of Mathematics, Theresienstrasse 39, D-80333 Munich, Germany} 
\email{nam@math.lmu.de}
\author[N. Rougerie]{Nicolas ROUGERIE}
\address{Universit\'e Grenoble-Alpes \& CNRS,  LPMMC (UMR 5493), B.P. 166, F-38042 Grenoble, France}
\email{nicolas.rougerie@grenoble.cnrs.fr}
\begin{document}
\date{Revised version from April, 2018. First version in February, 2018}

\begin{abstract} 
We consider the Gross-Pitaevskii equation describing an attractive Bose gas trapped to a quasi 2D layer by means of a purely harmonic potential, and which rotates at a fixed speed of rotation $\Omega$. First we study the behavior of the ground state when the coupling constant approaches $a_*$, the critical strength of the cubic nonlinearity for the focusing nonlinear Schrödinger equation. We prove that blow-up always happens at the center of the trap, with the blow-up profile given by the Gagliardo-Nirenberg solution. In particular, the blow-up scenario is independent of $\Omega$, to leading order. This generalizes results obtained by Guo and Seiringer (\textit{Lett. Math. Phys.}, 2014, vol. 104, p. 141--156) in the non-rotating case. In a second part we consider the many-particle Hamiltonian for $N$ bosons, interacting with a potential rescaled in the mean-field manner $-a_NN^{2\beta-1}w(N^\beta x)$, with $w\geq0$ a positive function such that $\int_{\R^2}w(x)\,dx=1$. Assuming that $\beta<1/2$ and that $a_N\to a_*$ sufficiently slowly, we prove that the many-body system is fully condensed on the Gross-Pitaevskii ground state in the limit $N\to\infty$. 
\end{abstract}

\maketitle

\bigskip\bigskip

\begin{flushright}
\sl Dedicated to Herbert Spohn, on the occasion of his 70th birthday
\end{flushright}

\bigskip\bigskip

\setcounter{tocdepth}{2}
\tableofcontents

%%%%%%%%%%%%%%%%%%%%%%%%%%%%%%%%%%%%%%%%%%
%%%%%%%%%%%%%%%%%%%%%%%%%%%%%%%%%%%%%%%%%%
\section{Introduction}
%%%%%%%%%%%%%%%%%%%%%%%%%%%%%%%%%%%%%%%%%%
%%%%%%%%%%%%%%%%%%%%%%%%%%%%%%%%%%%%%%%%%%

Because of their ability to display quantum effects at the macroscopic scale, Bose-Einstein condensates (BEC) have become an important subject of research, in particular after their first realization in the laboratory in 1995~\cite{CorWie-95,CorWie-02,Ketterle-95,Ketterle-02}. Condensates with \emph{attractive} interactions are expected to behave quite differently from the better understood \emph{repulsive} case, and they have generated many experimental, numerical or theoretical works. Some atoms like $^7$Li indeed have a negative scattering length and were initially believed not to be able to form a condensate, until attractive BECs were finally experimentally realized in traps~\cite{BraSacTolHul-95}. 

For attractive interactions, the Gross-Pitaevskii functional, commonly used to describe BECs, predicts a collapse of the system when $Na$ (the number of particles times the scattering length) is too negative~\cite{BayPet-96,DalStr-96,UedLeg-98,MueBay-00,GuoSei-14}, an effect which has been observed in some experiments~\cite{GerStrProHul-00,DonClaCorRobCorWie-01}. In addition, attractive Bose-Einstein condensates are believed to respond to rotation in a rather different manner from the repulsive case. In a rotating repulsive Bose gas, a triangular lattice of vortices is formed, with the number of vortices increasing with the speed of rotation~\cite{Cooper-08,Fetter-09,Aftalion-06,CorPinRouYng-11b}. On the contrary, it has been argued~\cite{WilGunSmi-98,Mottelson-99,PetPit-00,SaiUed-04,LunColSuo-04,ColLunSuo-05,CarCla-06,SakMal-08} that in an attractive rotating Bose gas, vortices should be unstable and it is instead the center of mass of the system which can rotate around the axis. 

In this paper, we rigorously establish two results about 2D attractive Bose-Einstein condensates in the critical regime of collapse. We consider a Bose gas trapped to a quasi 2D layer by means of a purely harmonic potential, and which rotates at a fixed speed of rotation $\Omega$. First we look at the Gross-Pitaevskii equation which describes the macroscopic behavior of the condensate~\cite{BaoCai-13}. We study its solutions in the regime where the coupling constant approaches the critical blow-up value $a_*$, given by the best constant in the Gagliardo-Nirenberg inequality. In this case we prove that blow-up always happens at the center of the trap, with the blow-up profile given by the Gagliardo-Nirenberg optimizer. This shows that the rotation does not affect the general blow-up scenario, to leading order. The non-rotating case has been previously studied by Guo and Seiringer in~\cite{GuoSei-14}. Other similar mathematical results on the trapped nonlinear Schrödinger equation for non-rotating gases include~\cite{Maeda-10,DenGuoLu-15,GuoZenZho-16,Thanh-17} in the stationary case and~\cite{Zhang-00,Carles-02,Zhang-05} in the time-dependent case.

In a second part we consider the many-particle (microscopic) Hamiltonian for $N$ such bosons, interacting with a potential rescaled in the mean-field manner
$$-a_NN^{2\beta-1}w(N^\beta x)$$
with $w\geq0$ a fixed positive function such that $\int_{\R^2}w(x)\,dx=1$. Assuming that $0<\beta<1/2$ and that $a_N\to a_*$ sufficiently slowly, we are able to show that the many-body system is fully condensed on the Gross-Pitaevskii ground state studied in the first step, in the limit $N\to\ii$. This justifies the validity of the Gross-Pitaevskii equation in this regime of collapse, with complete Bose-Einstein condensation at the point of blow-up. We do not observe fragmented condensation at this order. 

The mathematical method used here follows several of our previous papers~\cite{LewNamRou-14,LewNamRou-15b,LewNamRou-16c,LewNamRou-17b}. Note that some authors have already dealt with the time-dependent equation, in the subcritical regime $a<a_*$, see~\cite{CheHol-17,LewNamRou-17b,JebPic-17,NamNap-17}.

The next section contains the precise definition of our model as well as the statement of our main results. The remainder of the paper is then devoted to their proofs. In Appendix~\ref{sec:extensions} we mention several possible extensions of our findings, without giving the detailed  mathematical proofs.

\subsubsection*{\bf Acknowledgement} 
It is our pleasure to dedicate this paper to Herbert Spohn, on the occasion of his 70th birthday. This project has received funding from the European Research Council (ERC) under the European Union's Horizon 2020 research and innovation programme (grant agreements MDFT No 725528 and CORFRONMAT No 758620).

%%%%%%%%%%%%%%%%%%%%%%%%%%%%%%%%%%%%%%%%%%%%%%%%%%%%%%%%%%%%%%%%%%%%%%%%%%
%%%%%%%%%%%%%%%%%%%%%%%%%%%%%%%%%%%%%%%%%%%%%%%%%%%%%%%%%%%%%%%%%%%%%%%%%%
\section{Models and main results}
%%%%%%%%%%%%%%%%%%%%%%%%%%%%%%%%%%%%%%%%%%%%%%%%%%%%%%%%%%%%%%%%%%%%%%%%%%
%%%%%%%%%%%%%%%%%%%%%%%%%%%%%%%%%%%%%%%%%%%%%%%%%%%%%%%%%%%%%%%%%%%%%%%%%%

%%%%%%%%%%%%%%%%%%%%%%%%%%%%%%%%%%%%%%%%%%%%%%%%%%%%%%%%%%%%%%%%%%%%%%%%%%
\subsection{Collapse of the rotating Gross-Pitaevskii ground state}

The Gross-Pitaevskii functional describing a condensed system of bosons trapped to a 2D plane and rotating along the third axis at an angular velocity $\Omega$ reads
\begin{equation}
\cE^{\rm GP}_{\Omega,a}(u)=\int_{\R^2}|\nabla u(x)|^2\,dx+\int_{\R^2}|x|^2|u(x)|^2\,dx-2\Omega\pscal{u,Lu}-\frac{a}2\int_{\R^2}|u(x)|^4\,dx
\label{eq:def_energy}
\end{equation}
where $L=-i x \wedge \nabla=i(x_2\partial_1-x_1\partial_2)$ is the angular momentum. Here we have chosen units such that the trapping potential has a trapping frequency $\Omega_{\rm trap}=1$, for simplicity. The system is stable under the assumption that $|\Omega|<1$ and $a<a_*$, where $a_*$ is the optimal constant of the Gagliardo-Nirenberg inequality~\cite{Weinstein-83,Zhang-00,GuoSei-14,Maeda-10,Frank-13}
\begin{align} \label{eq:GN}
 \left(\int_{\R^2} |\nabla u|^2 \right) \left(\int_{\R^2} |u|^2 \right) \ge \frac{a_*}{2} \int_{\R^2} |u|^4, \quad \forall u\in H^1(\R^2).
\end{align}
Equivalently,  
$$a_* = \norm{Q}_{L ^2 (\R ^2)} ^2, $$ 
where $Q\in H ^1 (\R ^2)$ is the unique positive solution of 
\begin{equation}\label{eq:Q-GN}
-\Delta Q + Q - Q ^3 = 0 \mbox{ in } \R ^2,
\end{equation}
up to translations. More precisely, $Q$ is symmetric radial decreasing and it is the unique (up to translations and dilations) optimizer for the Gagliardo-Nirenberg inequality \eqref{eq:GN}. 
In the following we therefore always assume that $0\leq \Omega<1$ and $0<a<a_*$. The energy may also be written in the form
\begin{multline}
 \cE^{\rm GP}_{\Omega,a}(u)=\int_{\R^2}|\nabla u(x)+i\Omega x^\perp u(x)|^2\,dx+(1-\Omega^2)\int_{\R^2}|x|^2|u(x)|^2\,dx\\-\frac{a}2\int_{\R^2}|u(x)|^4\,dx
 \label{eq:def_energy_carre}
\end{multline}
where $x^\perp=(-x_2,x_1)$. We call
$$\boxed{E^{\rm GP}_\Omega(a):=\min_{\substack{u\in H^1(\R^2), \norm{u}_{L^2}=1}}\cE^{\rm GP}_{\Omega,a}(u)}$$
the ground state energy and look at the limit $a\to a_*$ at fixed $0\leq\Omega<1$. The existence of ground states follows the standard direct method in the calculus of variations. Our first main result is the following.

\begin{theorem}[\textbf{Collapse of rotating Gross-Pitaevskii ground states}]\label{thm:GP}\mbox{}\\
Let $0\leq \Omega<1$ be any fixed rotation. Then we have
\begin{equation}
\boxed{E^{\rm GP}_\Omega(a)=E^{\rm GP}_0(a)+o\big(E^{\rm GP}_0(a)\big)=\sqrt{a_*-a} \left( \frac{2\lambda_*^2}{a_*} + o(1)\right),}
 \label{eq:convergence_energy}
\end{equation}
when $a\to a_*$, where
$$
\lambda_*=\left( \int_{\R^2} |x|^2 |Q(x)|^2 dx \right)^{\frac{1}{4}}
$$
and $Q$ is the unique radial positive Gagliardo-Nirenberg solution~\eqref{eq:Q-GN}.

In addition, for any sequence $a_N\to a_*$ and any sequence $\{u_N\}$ such that $\|u_N\|_{L^2(\R^2)}=1$ and 
\begin{equation}
\cE^{\rm GP}_{\Omega,a_N}(u_N)=E^{\rm GP}_\Omega(a_N)+o(\sqrt{a_*-a_N}),
\label{eq:def_approx_GS}
\end{equation}
(for instance $u_N$ a minimizer of $E_\Omega(a_N)$), we have 
\begin{equation}
\boxed{\lim_{N\to\ii}\norm{u_{N}-e^{i\theta_N}Q_{N}}_{L^2(\R^2)}=0,}
\label{eq:approx_GS}
\end{equation}
for a properly chosen phase $\theta_N\in [0,2\pi)$, where
\begin{equation}
 Q_{N}(x)=  (a_*)^{-1/2}\lambda_* (a_*-a_N)^{-\frac{1}{4}} Q\left( \lambda_* (a_*-a_N)^{-\frac{1}{4}} x\right)
 \label{eq:def_Q_N}
\end{equation}
is the rescaled Gagliardo-Nirenberg optimizer which blows up at the origin at speed $ (a_*-a_N)^{\frac{1}{4}}/\lambda_*$.
\end{theorem}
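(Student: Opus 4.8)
The plan is to sandwich $E^{\rm GP}_\Omega(a)$ between a variational upper bound built from a \emph{real} trial state and a lower bound extracted from a compactness analysis of (approximate) minimizers; the only genuinely new point compared with the non‑rotating case of~\cite{GuoSei-14} is to show that the angular‑momentum term is always negligible to leading order. For the upper bound we test $\cE^{\rm GP}_{\Omega,a}$ against $u_\tau:=\|Q\|_{L^2}^{-1}\tau\,Q(\tau\,\cdot)$. Since $u_\tau$ is real, $\langle u_\tau,Lu_\tau\rangle=0$ by integration by parts, so $\cE^{\rm GP}_{\Omega,a}(u_\tau)=\cE^{\rm GP}_{0,a}(u_\tau)$, and the Pohozaev identities $\int|\nabla Q|^2=\int Q^2=a_*$, $\int Q^4=2a_*$ give $\cE^{\rm GP}_{0,a}(u_\tau)=\tau^2\tfrac{a_*-a}{a_*}+\tfrac{\lambda_*^4}{a_*}\tau^{-2}$, minimized at $\tau=\lambda_*(a_*-a)^{-1/4}$ — i.e.\ at $u_\tau=Q_N$ (with $a$ in place of $a_N$) — with value exactly $\tfrac{2\lambda_*^2}{a_*}\sqrt{a_*-a}$. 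Since the non‑rotating minimizer may be taken real (replace $u$ by $|u|$, using the diamagnetic inequality), the same observation yields the slightly sharper $E^{\rm GP}_\Omega(a)\le E^{\rm GP}_0(a)$, which is what we use below.

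Next we record a priori bounds on $u_N$ satisfying~\eqref{eq:def_approx_GS}. From~\eqref{eq:def_energy_carre}, the diamagnetic inequality $\|\nabla|u|\|_{L^2}\le\|\nabla u+i\Omega x^\perp u\|_{L^2}$ and~\eqref{eq:GN}, we have $\cE^{\rm GP}_{\Omega,a_N}(u_N)\ge(1-\tfrac{a_N}{a_*})\|\nabla u_N+i\Omega x^\perp u_N\|_{L^2}^2+(1-\Omega^2)\int|x|^2|u_N|^2$, so the upper bound forces $\int|x|^2|u_N|^2\lesssim\sqrt{a_*-a_N}$ and $\|\nabla u_N\|_{L^2}^2\lesssim(a_*-a_N)^{-1/2}$. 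Writing $u_N=|u_N|e^{i\psi_N}$, $\|\nabla u_N\|^2=\|\nabla|u_N|\|^2+V_N$ with $V_N=\int|u_N|^2|\nabla\psi_N|^2$, and $\cE^{\rm GP}_{\Omega,a_N}(u_N)=\cE^{\rm GP}_{0,a_N}(|u_N|)+V_N-2\Omega\langle u_N,Lu_N\rangle$, the bounds $\cE^{\rm GP}_{0,a_N}(|u_N|)\ge E^{\rm GP}_0(a_N)$ and $\cE^{\rm GP}_{\Omega,a_N}(u_N)\le E^{\rm GP}_0(a_N)+o(\sqrt{a_*-a_N})$ give $V_N\le2\Omega\langle u_N,Lu_N\rangle+o(\sqrt{a_*-a_N})$. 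The crucial estimate is that $\langle u_N,Lu_N\rangle=\int|u_N|^2\,x^\perp\!\cdot\nabla\psi_N$ only feels the non‑radial part of the density: introducing the Helmholtz potential $g_N$ with $\nabla\!\cdot\!\big(|u_N|^2\nabla g_N\big)=\nabla\!\cdot\!\big(|u_N|^2x^\perp\big)$ and its ``angular capacity'' $G_N:=\int|u_N|^2|\nabla g_N|^2$ (which vanishes when $|u_N|$ is radial), one has $\langle u_N,Lu_N\rangle=\int|u_N|^2\nabla g_N\!\cdot\!\nabla\psi_N$, hence $|\langle u_N,Lu_N\rangle|\le\sqrt{G_N}\sqrt{V_N}$ and $G_N\le\int|x|^2|u_N|^2\lesssim\sqrt{a_*-a_N}$; so $V_N\lesssim\sqrt{a_*-a_N}$ as well.

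Now we rescale: with $\epsilon_N=(a_*-a_N)^{1/4}/\lambda_*$ and $v_N(x)=\epsilon_N u_N(\epsilon_N x)$, the a priori bounds say $\{v_N\}$ is bounded in $H^1$ with $\int|x|^2|v_N|^2$ bounded, so along a subsequence $v_N\to v_0$ strongly in $L^2\cap L^4$ and weakly in $H^1$, $\|v_0\|_{L^2}=1$. Multiplying the scaled identity $\cE^{\rm GP}_{\Omega,a_N}(u_N)=\epsilon_N^{-2}\big(\|\nabla v_N\|^2-\tfrac{a_N}{2}\|v_N\|_4^4\big)+\epsilon_N^2\int|x|^2|v_N|^2-2\Omega\langle v_N,Lv_N\rangle$ by $\epsilon_N^2$ and noting that the last two terms are $O(\epsilon_N^2)$ while the first bracket is $\ge0$ by~\eqref{eq:GN}, we get $\|\nabla v_N\|^2-\tfrac{a_N}{2}\|v_N\|_4^4\to0$; hence $v_0$ saturates~\eqref{eq:GN}, $v_N\to v_0$ strongly in $H^1$, and $v_0(x)=\|Q\|_{L^2}^{-1}e^{i\gamma}\beta\,Q\big(\beta(x-z_0)\big)$ for some $\beta>0$, $\gamma\in\R$, $z_0\in\R^2$.

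Finally, completing the square in $|2\Omega\langle u_N,Lu_N\rangle|\le2\Omega\sqrt{G_N}\sqrt{V_N}$ gives $V_N-2\Omega\langle u_N,Lu_N\rangle\ge-\Omega^2G_N$, so $\cE^{\rm GP}_{\Omega,a_N}(u_N)\ge\cE^{\rm GP}_{0,a_N}(|u_N|)-\Omega^2G_N$. Rescaling and using $v_N\to v_0$, one gets $\cE^{\rm GP}_{0,a_N}(|u_N|)\ge\sqrt{a_*-a_N}\big(\tfrac{\lambda_*^2}{a_*}(\beta^2+\beta^{-2})+\tfrac{|z_0|^2}{\lambda_*^2}\big)+o(\sqrt{a_*-a_N})$ (using $\|\nabla|v_0|\|^2\ge\tfrac{a_*}{2}\|v_0\|_4^4$, the remainder $\tfrac{a_*-a_N}{2}\|v_0\|_4^4$, and $\int|x|^2|v_0|^2=\beta^{-2}\lambda_*^4/a_*+|z_0|^2$), while the crucial computation is that the angular capacity of a density radial about $z_0$ equals \emph{exactly} $|z_0|^2$ — its Helmholtz potential is then the linear function $z_0^\perp\!\cdot x$ — so $G_N=\sqrt{a_*-a_N}\,|z_0|^2/\lambda_*^2+o(\sqrt{a_*-a_N})$. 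Hence
\[
\cE^{\rm GP}_{\Omega,a_N}(u_N)\ \ge\ \sqrt{a_*-a_N}\left(\frac{\lambda_*^2}{a_*}\big(\beta^2+\beta^{-2}\big)+\frac{(1-\Omega^2)|z_0|^2}{\lambda_*^2}\right)+o(\sqrt{a_*-a_N})\ \ge\ \frac{2\lambda_*^2}{a_*}\sqrt{a_*-a_N}+o(\sqrt{a_*-a_N}),
\]
because $\beta^2+\beta^{-2}\ge2$ and $\Omega<1$. Taking $u_N$ a minimizer, this matches the upper bound and proves~\eqref{eq:convergence_energy} (and $E^{\rm GP}_\Omega(a)=E^{\rm GP}_0(a)+o(E^{\rm GP}_0(a))$ since both are $\tfrac{2\lambda_*^2}{a_*}\sqrt{a_*-a}\,(1+o(1))$); for general $u_N$ as in~\eqref{eq:def_approx_GS}, equality to leading order forces $\beta=1$, $z_0=0$, i.e.\ $v_0=e^{i\gamma}Q/\|Q\|_{L^2}$, and since the limit is unique up to a constant phase the whole sequence converges after subtracting a phase; undoing the ($L^2$‑norm preserving) rescaling then gives~\eqref{eq:approx_GS}. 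We expect the hard part to be exactly this control of $\langle u,Lu\rangle$: the crude bound $|\langle u,Lu\rangle|\le\big(\int|x|^2|u|^2\big)^{1/2}\big(\|\nabla u\|_{L^2}^2-\|\nabla|u|\|_{L^2}^2\big)^{1/2}$ would only give $E^{\rm GP}_\Omega(a)\ge\sqrt{1-\Omega^2}\,E^{\rm GP}_0(a)$, with the wrong constant; the correct bookkeeping via the angular capacity shows that the rotation lowers the energy only by letting the condensate sit off center, at a price $(1-\Omega^2)|z_0|^2$ in the harmonic trap that always beats the gain when $\Omega<1$.
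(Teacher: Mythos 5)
Your architecture (variational upper bound with the real rescaled $Q$, compactness of the rescaled sequence, identification of the limit as a Gagliardo--Nirenberg profile $Q_{\beta,z_0}$, then a sharp lower bound on the rotation term) is sensible, and your upper bound and compactness steps match the paper's. The gap sits in the one step that actually distinguishes the rotating case from Guo--Seiringer: the claim that the angular capacity satisfies $G_N=\sqrt{a_*-a_N}\,|z_0|^2/\lambda_*^2+o(\sqrt{a_*-a_N})$. Your computation with the linear Helmholtz potential $z_0^\perp\cdot x$ only evaluates the angular capacity \emph{of the limiting density} $|v_0|^2$; you then assume without proof that $G_N$ converges to that value. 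But $G_N=\sup_g\,\big(\int\rho_N\,x^\perp\cdot\nabla g\big)^2\big/\int\rho_N|\nabla g|^2$ is a supremum of functionals of $\rho_N$, hence only \emph{lower} semicontinuous along the convergence you have established ($v_N\to v_0$ in $L^2\cap L^4\cap H^1$ with bounded second moment), whereas your energy lower bound requires $\limsup_N G_N\le \eps_N^2|z_0|^2$, i.e.\ \emph{upper} semicontinuity. This is not a formality: a vanishing mass $m_N\sim R_N^{-2}$ placed at distance $R_N\to\infty$ perturbs $\rho_N$ by $o(1)$ in every $L^p$, keeps $\int|x|^2\rho_N$ bounded, and yet contributes $\sim m_NR_N^2\sim 1$ to the rescaled angular capacity; your two-step scheme (first identify $v_0$, then evaluate $G$ at the limit) cannot exclude such configurations a priori --- they are only ruled out by the sharp energy expansion you are in the course of proving. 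Closing the gap would require, e.g., a localization plus the subadditivity of $G$, showing that escaping second moment costs more in the trap (coefficient $1$) than it can gain in the rotation term (coefficient $\Omega^2$), together with genuine continuity of $G$ on the non-escaping part; none of this is routine, and simply dropping $G_N$ against the trap term brings you back to the $(1-\Omega^2)$-prefactor bound with the wrong constant, as you yourself note.

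For comparison, the paper circumvents exactly this difficulty by a completely different mechanism: it first works with \emph{true} minimizers and their Euler--Lagrange equation, proves uniform $H^2$ bounds and exponential decay (so that $|x|v_\eps\to|x|Q_{\lambda,X}$ strongly), uses the non-degeneracy of $Q$ --- the spectral gap of $\cL_-=-\Delta-a_*Q_{\lambda,X}^2+\lambda^2$ on the orthogonal complement of $Q_{\lambda,X}$ --- to show the imaginary part is $O(\eps)$ in $H^1$, reduces $\pscal{v_\eps,Lv_\eps}$ to $2\int X^\perp\cdot q_\eps\nabla r_\eps+o(\eps)$ using the radiality of $Q_*$, and then removes the angular momentum term exactly by the gauge change $v_\eps=f_\eps e^{i\eps\Omega X^\perp\cdot x}$ at cost $-\eps^2\Omega^2|X|^2$; approximate minimizers are treated afterwards by the convexity identity $\cF_{\Omega,\eps}=\eta\,\cF_{\Omega/\eta,\eps}+(1-\eta)\cF_{0,\eps}$. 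Your Helmholtz/angular-capacity idea is appealing precisely because it would avoid the equation and the non-degeneracy of $Q$ altogether, but as written the decisive estimate is asserted rather than proved, and it is the hardest point of the theorem.
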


This theorem was proved by Guo and Seiringer in~\cite{GuoSei-14} in the case $\Omega=0$, with the convergence of the ground states but not of general ``approximate ground states" $u_N$'s as in~\eqref{eq:approx_GS}. Our theorem shows that the results found by Guo and Seiringer remain valid when the system is set in rotation, the blow-up scenario being independent of $\Omega$ to leading order. In particular, we do not see a rotation of the center of mass at this order. Our method of proof relies on the non-degeneracy of the minimizer $Q$, which is known to play a fundamental role in many situations~\cite{Weinstein-85,ChaGusNakTsa-08}. We expect that the non-degeneracy of $Q$ should provide quantitative estimates for the difference between $Q_N$ and the ground state of $E_\Omega(a_N)$, however we have not investigated this question in details.

\begin{remark}\mbox{}\\
(i) It would be interesting to investigate the case where $\Omega=\Omega_N\to1$ at the same time as $a_N\to a_*$. In this case the centrifugal force almost compensates the trapping potential, and this effect could compete with the collapse scenario induced by attractive interactions. 

\smallskip

\noindent (ii) Our proof covers more general external potentials attaining their minimum at the origin and behaving at least quadratically at zero and at infinity, like for instance the quartic-quadratic potential $V(x)=|x|^2+k|x|^4$ with $k>0$. \hfill$\diamond$
\end{remark}

%%%%%%%%%%%%%%%%%%%%%%%%%%%%%%%%%%%%%%%%%%%%%%%%%%%%%%%%%%%%%%%%%%%%%%%%%%
\subsection{Collapse of the many-body system in the Gross-Pitaevskii ground state}

Next we turn to the $N$-particle quantum Hamiltonian describing our trapped 2D bosons, which reads
\begin{equation}\label{eq:HN}
H_N = \sum_{j=1} ^N  \left( -\Delta_{x_j} + |x_j|^2-2\Omega L_{x_j}\right) - \frac{a}{N-1} \sum_{1\leq i<j \leq N}w_N(x_i-x_j),
\end{equation}
and acts on $\gH^N = \bigotimes_{\rm sym}^N L^2(\R^2)$, the Hilbert space of square-integrable symmetric functions. The two-body interaction $w_N$ approaches a Dirac delta and is chosen in the form 
\begin{align} \label{eq:assumption-wN}
w_N(x)=N^{2\beta} w(N^\beta x)
\end{align}
for a fixed parameter $\beta>0$ and a fixed function $w$ satisfying
\begin{align}
\label{eq:assumption-w1} 
w(x)=w(-x) \geq 0, \quad (1+|x|)w,\ \hat w  \in L^1(\R^2),  \quad   \int_{\R ^2} w = 1.
\end{align}
Finally, $a>0$ is a parameter %of order $1$ 
which describes the strength of the interaction. We will take 
$$a=a_N \to a_*$$
which is the Gagliardo-Nirenberg critical constant mentioned before. 

Hamiltonians of the form~\eqref{eq:HN} have generated a huge amount of works in the past decades, in any dimension $d$. The chosen coupling constant proportional to $1/(N-1)$ ensures that the kinetic and interaction energies are comparable in the limit $N\to \infty$. Due to the trapping potential, most of the particles will usually accumulate in a bounded region of space, leading to a high density of order $N$ (in our case they will even collapse at one point). 

In this paper we are interested in the behavior of the ground state energy per particle of~$H_N$,
\begin{equation}\label{eq:GS ener many}
E^{\rm Q}_{\Omega,a}(N) := N^{-1}\inf_{\Psi\in \gH^N, \|\Psi\|=1} \langle \Psi, H_N \Psi \rangle,
\end{equation}
and in the corresponding (non necessarily unique) ground states $\Psi_N$, when $N\to \infty$. In the regime considered in this paper, we expect that the particles will essentially become independent (Bose-Einstein condensation), that is, in terms of wave functions:
\begin{equation}
  \Psi_N(x_1,...,x_N) \approx u^{\otimes N}(x_1,...,x_N):=u(x_1)u(x_2)...u(x_N).
 \label{eq:BEC}
\end{equation}
Indeed, if $w_N\equiv 0$ then the first eigenfunction $\Psi_N$ of $H_N$ is exactly of this form, with $u$ the first eigenfunction of the one-body operator $-\Delta+|x|^2-2\Omega L$. For our interacting Hamiltonian $H_N$, $\Psi_N$ will \emph{never} be of this form, because there is no reason to believe that \emph{all} the particles ought to be in the state $u$. Only of the order of $N$ of them would suffice~\cite{LewNamSerSol-15}. Nevertheless, the ansatz~\eqref{eq:BEC} provides the right energy to leading order, as well as the right density matrices, as we will prove in this paper, and as it has already been shown in many other similar situations, see~\cite{LieSeiSolYng-05,Rougerie-15,Rougerie-hdr} for reviews.

The energy per particle of the fully condensed trial function  $u^{\otimes N}$ is given by the Hartree energy functional
\begin{multline}\label{eq:Hartree func} 
\cE^{\rm H}_{\Omega,a,N}(u)=\frac{\langle u^{\otimes N}, H_N u^{\otimes N}\rangle }{N}= \int_{\R^2} \Big(  |\nabla u(x)|^2 + |x|^2|u(x)|^2\Big)\,dx-2\Omega\pscal{u,Lu}\\-\frac{a}{2}\iint_{\R^2\times\R^2}w_N(x-y) |u(x)|^2 |u(y)|^2\,dx\,dy.
\end{multline}
The infimum of this functional over the set of all $u$'s with $\|u\|_{L^2(\R^2)}=1$,
\begin{equation}
E^{\rm H}_{\Omega,a}(N):=\inf_{\|u\|_{L^2(\R^2)}=1}\cE^{\rm H}_{\Omega,a,N}(u)
\end{equation}
is thus an upper bound to the many-body energy:
$$E^{\rm Q}_{\Omega,a}(N)\leq E^{\rm H}_{\Omega,a}(N).$$ 
When $N\to \infty$, since $w_N \wto \delta_0$, the Hartree functional \eqref{eq:Hartree func} {\em formally} boils down to the trapped nonlinear Gross-Pitaevskii functional $\cE^{\rm GP}_{\Omega,a}$ which we have introduced in~\eqref{eq:def_energy}. We can therefore expect that 
$$E^{\rm H}_{\Omega,a}(N)\simeq E^{\rm GP}_{\Omega}(a),$$
and that their ground states are close. %they have similar ground states. 
At fixed $a<a_*$ this was shown in~\cite{LewNamRou-17b}, but here we need to control the limit $a_N\to a_*$ at the same time as $N\to\ii$ and the corresponding estimates will be provided later in the proof of Proposition~\ref{prop:GSE-1}.

In \cite{LewNamRou-16c,LewNamRou-17b} we have proved that if $a<a_*$ is fixed and $0\leq \Omega<1$, then the many-body ground states of $H_N$ are condensed on the minimizer(s) of the Gross-Pitaevskii functional. In the present paper, we will consider the limit where $a_N\to a_*$ as $N\to \infty$. In that case, the Gross-Pitaevskii minimizer blows up at the center $x=0$ of the trap, as shown in Theorem~\ref{thm:GP} above. We will prove that the many-particle ground state $\Psi_N$ condensates on the exact same function $Q_N$, hence derive a many-body analogue to the result of Guo and Seiringer \cite{GuoSei-14}, at positive rotation.

As usual, the convergence of ground states is formulated using $k$-particles reduced density matrices, defined for any $\Psi_N \in\gH^N$ by a partial trace $$\gamma_{\Psi_N}^{(k)}:= \Tr_{k+1\to N} |\Psi_N \rangle \langle \Psi_N|$$
or, equivalently, $\gamma_{\Psi_N}^{(k)}$ is the trace class operator on $\gH^k$ with kernel 
$$
\gamma_{\Psi_N}^{(k)} (x_1,...,x_k; y_1,...,y_k)= \int_{\R^{2(N-k)}} \Psi_N(x_1,...,x_k,Z) \overline{\Psi_N(y_1,...,y_k,Z)} dZ.
$$  
Bose-Einstein condensation is properly expressed by the convergence in trace norm
$$
\lim_{N\to \infty} \Tr \Big| \gamma_{\Psi_N}^{(k)} - |u^{\otimes k}\rangle \langle u^{\otimes k}| \Big| =0, \quad \forall k\in \mathbb{N}.
$$ 
%instead of looking at $\Psi_N-u_0^{\otimes N}$. 
Our second main result is the following

\begin{theorem}[\textbf{Collapse and condensation of the many-body ground state}]\label{thm:cv-nls}\mbox{}\\
Let $0\leq\Omega<1$, and $a_N=a_*-N^{-\alpha}$ with 
$$0<\alpha< \min\left\{\frac{4}{5}\beta, 2(1-2\beta)\right\}.$$
Let $\Psi_N$ be any ground state of $H_N$. Then we have 
\begin{align} \label{eq:thm-BEC}
\boxed{\lim_{N \to \infty} \Tr\Big| \gamma_{\Psi_{N} }^{(k)} -  |Q_N^{\otimes k} \rangle \langle Q_{N}^{\otimes k}| \Big|=0,}
\end{align}
for all $k\in \mathbb{N}$, where $Q_N$ is the rescaled Gagliardo-Nirenberg optimizer introduced in~\eqref{eq:def_Q_N}.
In addition, we have
\begin{equation}
\boxed{E^{\rm Q}_{\Omega,a_N}(N)= E^{\rm GP}_\Omega(a_N)+o\big(E^{\rm GP}_\Omega(a_N)\big)=\sqrt{a_*-a_N} \left( \frac{2\lambda_*^2}{a_*} + o(1)\right).}
\label{eq:CV_energy}
\end{equation}
\end{theorem}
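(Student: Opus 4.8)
The plan is to sandwich $E^{\rm Q}_{\Omega,a_N}(N)$ between a sharp trial-state upper bound and a matching lower bound obtained, after rescaling all lengths by the blow-up scale $L_N=\lambda_*(a_*-a_N)^{-1/4}$, by the localization-plus-quantum-de-Finetti strategy of our earlier works, and then to upgrade the resulting energy identity to the convergence of the reduced density matrices. For the upper bound I would test $H_N$ against $Q_N^{\otimes N}$, so that $E^{\rm Q}_{\Omega,a_N}(N)\le\cE^{\rm H}_{\Omega,a_N,N}(Q_N)$. Since $Q$ is radial, $\langle Q_N,LQ_N\rangle=0$ and the rotation drops out, while the Pohozaev identities $\|\nabla Q\|_{L^2}^2=\|Q\|_{L^2}^2=a_*$ and $\|Q\|_{L^4}^4=2a_*$ give the \emph{exact} value $\cE^{\rm GP}_{\Omega,a_N}(Q_N)=\tfrac{2\lambda_*^2}{a_*}\sqrt{a_*-a_N}$ (this explicit computation underlies the upper bound in Theorem~\ref{thm:GP}). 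It then remains to estimate the nonnegative difference
\[
\cE^{\rm H}_{\Omega,a_N,N}(Q_N)-\cE^{\rm GP}_{\Omega,a_N}(Q_N)=\frac{a_N}{4}\int_{\R^2}w_N(z)\,\| |Q_N(\cdot-z)|^2-|Q_N|^2 \|_{L^2(\R^2)}^2\,dz ,
\]
which, splitting the $z$-integral at an optimized radius and using only the first-moment bound on $w$ from~\eqref{eq:assumption-w1} together with $\| |Q_N|^2 \|_{L^2}^2\sim N^{\alpha/2}$ and $\|\nabla|Q_N|^2\|_{L^2}^2\sim N^{\alpha}$, is $O(N^{3\alpha/4-\beta})$, hence $o(\sqrt{a_*-a_N})$ exactly when $\alpha<\tfrac45\beta$. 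This gives $E^{\rm Q}_{\Omega,a_N}(N)\le E^{\rm GP}_\Omega(a_N)+o(\sqrt{a_*-a_N})$.

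For the lower bound I would first record a priori bounds: combining the upper bound with a many-body Gagliardo--Nirenberg-type inequality that keeps the sharp constant $a_*$ and quantifies $w_N\wto\delta_0$ yields $\Tr[-\Delta\,\gamma^{(1)}_{\Psi_N}]\lesssim N^{\alpha/2}$ and $\Tr[|x|^2\,\gamma^{(1)}_{\Psi_N}]\lesssim N^{-\alpha/2}$ for any ground state $\Psi_N$. Passing to the unitarily dilated state $\widehat\Psi_N(x_1,\dots,x_N)=L_N^{-N}\Psi_N(L_N^{-1}x_1,\dots,L_N^{-1}x_N)$ these become $\Tr[(-\Delta+|x|^2)\,\gamma^{(1)}_{\widehat\Psi_N}]=O(1)$: in the rescaled variables $\widehat\Psi_N$ is concentrated at scale one relative to the \emph{fixed} harmonic oscillator, the rescaled interaction is a probability density concentrated at scale $N^{-\beta+\alpha/4}$ of height $\sim N^{2\beta-\alpha/2}$, and the relevant energy scale is $L_N^{-2}E^{\rm GP}_\Omega(a_N)\sim a_*-a_N\sim N^{-\alpha}$. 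I would then localize $\widehat\Psi_N$ onto $P=\1(-\Delta+|x|^2\le\Lambda_N)$, of dimension $\sim\Lambda_N^2$ with $\Lambda_N\to\infty$ slowly, at a controlled energy cost, and apply the quantitative quantum de Finetti theorem to the localized two-particle density matrix, obtaining a Borel probability measure $\mu_N$ on the unit sphere of $PL^2(\R^2)$ with $\Tr\big|\gamma^{(2)}_{\widehat\Psi_N}-\int|u^{\otimes2}\rangle\langle u^{\otimes2}|\,d\mu_N(u)\big|\lesssim\Lambda_N^2/N$. Reading off the two-body energy then gives $L_N^{-2}E^{\rm Q}_{\Omega,a_N}(N)\ge\int\widehat{\cE}^{\rm H}(u)\,d\mu_N(u)-(\text{errors})$, where $\widehat{\cE}^{\rm H}$ is the correspondingly rescaled Hartree functional; and since smearing makes the Hartree functional pointwise \emph{larger} than the Gross--Pitaevskii one (the same identity as above with $Q_N$ replaced by $u$), one has $\widehat{\cE}^{\rm H}(u)\ge L_N^{-2}E^{\rm GP}_\Omega(a_N)$ for all $u$, hence $E^{\rm Q}_{\Omega,a_N}(N)\ge E^{\rm GP}_\Omega(a_N)-o(\sqrt{a_*-a_N})$. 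Together with the upper bound and the first statement of Theorem~\ref{thm:GP}, this is~\eqref{eq:CV_energy}.

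Feeding the matching energies back into the de Finetti estimate forces $\int[\widehat{\cE}^{\rm GP}(u)-L_N^{-2}E^{\rm GP}_\Omega(a_N)]\,d\mu_N(u)\to0$ on the rescaled energy scale, so, by a Chebyshev argument together with the second statement of Theorem~\ref{thm:GP} applied in the rescaled variables (which is exactly the regime $a_N\to a_*$), the measure $\mu_N$ concentrates in $L^2(\R^2)$ on the set of minimizers of the rescaled Gross--Pitaevskii functional, namely $\{e^{i\theta}Q/\sqrt{a_*}:\theta\in[0,2\pi)\}$. Since $|e^{i\theta}u\rangle\langle e^{i\theta}u|=|u\rangle\langle u|$, the phase is irrelevant for density matrices, so $\int|u^{\otimes k}\rangle\langle u^{\otimes k}|\,d\mu_N(u)\to|(Q/\sqrt{a_*})^{\otimes k}\rangle\langle(Q/\sqrt{a_*})^{\otimes k}|$ in trace norm for every $k$; combining with the de Finetti bound and undoing the (trace-norm preserving) dilation yields $\Tr\big|\gamma^{(k)}_{\Psi_N}-|Q_N^{\otimes k}\rangle\langle Q_N^{\otimes k}|\big|\to0$, which is~\eqref{eq:thm-BEC}.

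The hard part is the lower bound. After rescaling the problem resembles a mean-field one, but the rescaled interaction still concentrates, with diverging height $\sim N^{2\beta-\alpha/2}$, so the localized two-body Hamiltonian (kinetic, weak rescaled trap, rotation, and the rescaled interaction) is not uniformly bounded, and one must keep simultaneously below the energy-per-particle scale $\sqrt{a_*-a_N}=N^{-\alpha/2}$: the localization leakage, the quantum de Finetti error (which grows with the cutoff dimension and the interaction height), and --- most restrictively --- the diagonal/self-interaction correction produced by the mean-field reduction, of order $a_N N^{-1}\|w_N\|_{L^\infty}\sim N^{2\beta-1}$. The condition $N^{2\beta-1}=o(N^{-\alpha/2})$ is precisely $\alpha<2(1-2\beta)$. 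Establishing the sharp many-body Gagliardo--Nirenberg inequality underlying the a priori bounds is a further technical point.
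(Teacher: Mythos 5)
Your upper bound is essentially the paper's: the trial state $Q_N^{\otimes N}$, the exact computation $\cE^{\rm GP}_{\Omega,a_N}(Q_N)=\tfrac{2\lambda_*^2}{a_*}\sqrt{a_*-a_N}$, and a first-order estimate of the Hartree--GP discrepancy of size $O(N^{3\alpha/4-\beta})$, which is where $\alpha<\tfrac45\beta$ enters (the paper phrases the discrepancy via a gradient/H\"older estimate rather than your $L^2$-difference of translates, but the order is identical). For the lower bound and for condensation, however, you take the localization-plus-quantitative-de-Finetti route of the authors' earlier works, whereas the paper deliberately does something else. For the energy lower bound it uses the L\'evy-Leblond/Lieb--Yau argument of~\cite[Section~3]{Lewin-15} to reduce, at cost $O(N^{2\beta-1})$ per particle, to a \emph{mixed-state} Hartree problem; the smeared interaction is dominated by the local one exactly by your pointwise inequality $\cE^{\rm H}\geq\cE^{\rm GP}$ (their~\eqref{eq:estim_interaction_delta}), and the mixed-state problem collapses to $E^{\rm GP}_\Omega(a_N)$ by concavity in the density matrix (Lemma~\ref{lem:mixed_GP}). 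Condensation is then obtained not from a de Finetti measure but from a Feynman--Hellmann argument: perturbing $H_N$ by $\eta A$, comparing ground-state energies, and invoking the \emph{approximate}-minimizer clause of Theorem~\ref{thm:GP} for the perturbed GP minimizers $u_{\pm\eta}$. This soft argument needs no localization, no finite-dimensional de Finetti theorem, and no moment bootstrap; its price is that it leans entirely on the uniqueness of the limiting profile.

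The substantive concern with your proposal is the de Finetti step itself, which you flag but do not close. The energy error it produces is not $\Lambda_N^2/N$ alone but that quantity multiplied by the operator norm of the \emph{projected} two-body interaction, which diverges with both the cutoff $\Lambda_N$ and the interaction height $\sim N^{2\beta}$; keeping this, the localization leakage, and the self-interaction term all below the shrinking energy scale $N^{-\alpha/2}$ simultaneously is precisely what forced the moment estimates and energy bootstrap of~\cite{LewNamRou-17b}, and it is not established that this machinery reaches the full range $\alpha<\min\{\tfrac45\beta,\,2(1-2\beta)\}$ claimed in the theorem. The paper explicitly states that the de Finetti strategy is viable but covers a narrower parameter range, which is why it was replaced. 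Likewise, your a priori bound $\Tr[-\Delta\gamma^{(1)}_{\Psi_N}]\lesssim N^{\alpha/2}$ rests on an unproven ``many-body Gagliardo--Nirenberg inequality with sharp constant'' in the presence of rotation (the Hoffmann-Ostenhof route is unavailable when $\Omega\neq0$, as the paper notes); in the paper this is bypassed because the L\'evy-Leblond reduction bounds only the interaction and leaves the one-body part, rotation included, untouched. So your outline is a coherent alternative, but as written it proves the theorem only for a smaller set of $(\alpha,\beta)$ unless the missing quantitative estimates are supplied.
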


\bigskip

\begin{remark}\mbox{}\\
(i) Note that the condition $\alpha < 2(1-2\beta)$ implies that we consider mean-field (by opposition to dilute, see~\cite[Section~5.1]{Rougerie-hdr}) interactions, i.e. their range is much larger than the average distance between particles. The latter is set by the length scale  of the GP ground state:
$$ \mbox{distance between particles } \propto \sqrt{\frac{(a_*-a_N)^{1/2}}{N}} \propto N^{-1/2 - \alpha / 4}.$$
We are in fact somewhat deep in the mean-field regime since the transition to dilute interactions would occur when 
$$\mbox{range of the interaction} \propto N^{-\beta} \propto N^{-1/2 - \alpha / 4} \propto \mbox{ distance between particles},$$
i.e. at $\beta \sim 1/2 + \alpha/4$.

The condition $\alpha < 4\beta /5$ is used to ensure that the Hartree and GP ground state problems are close in the limit $N\to \infty$.

\smallskip

(ii) By using the method of~\cite{LewNamRou-17b}, we expect that our result can be extended to a dilute regime as well, i.e. to some (not too large)
$$ \beta > \frac12 + \frac{\alpha}{4}.$$ 
Here we assume $\beta<1/2 - \alpha/4$ for simplicity as this ensures the stability of the many-body system immediately~\cite{LewNamRou-17b}. The approach we follow is significantly simpler than that of~\cite{LewNamRou-17b}, since we use neither the moments estimates, nor the bootstrap on the energy introduced therein. The proof is less flexible however and deeply relies on the uniqueness of the limit profile for GP ground states.  

\smallskip

\noindent (ii) When $a_N=a_*$, it is not clear to us what happens in the large $N$ limit. While the existence of the ground state of $H_N$ still holds true, the blow-up phenomenon becomes more complicated. The behavior of the minimizers for the Hartree functional $\cE^{\rm H}_{\Omega,a_*,N}(u)$ in \eqref{eq:Hartree func} when $N\to \infty$ seems to be open. It seems also difficult to look at the case where $\Omega$ depends on $N$ as well and approaches its limit of stability $\Omega_N\to1$.\hfill$\diamond$ 
\end{remark}

In~\cite{LewNamRou-17b} and several of our previous works~\cite{LewNamRou-14,LewNamRou-16c,LewNamRou-17b,LewNamRou-15,NamRouSei-15}, our approach was based on the \emph{quantum de Finetti theorem}~\cite{Stormer-69,HudMoo-75}, a non-commutative version of the de Finetti-Hewitt-Savage theorem for exchangeable random variables in probability theory~\cite{DeFinetti-31,DeFinetti-37,DiaFre-80,HewSav-55}. More precisely, we used a quantitative version of this theorem in finite-dimensional spaces, which we have proved in~\cite[Lemmas 3.4, 3.6]{LewNamRou-15b} and which extends several previous results by different authors~\cite{KonRen-05,FanVan-06,ChrKonMitRen-07,Chiribella-11,Harrow-13}. The idea of using de Finetti theorems in the context of mean-field limits is not new. For classical systems, this has been put forward by Spohn~\cite{Spohn-81,MesSpo-82,KieSpo-99} and then extended in many directions, see, e.g.,~\cite{CagLioMarPul-92,Kiessling-93,Rougerie-15} and the references therein. For the mean-field limit of quantum systems, the older results in this spirit include~\cite{FanSpoVer-80,VdBLewPul-88,PetRagVer-89,RagWer-89,Werner-92}.

Although one can follow the same strategy here, we give below a different proof of Bose-Einstein condensation, based on a Feynman-Hellman-type argument. This method is much less flexible (it relies on the uniqueness of the limit profile for GP ground states), but it allows to cover a wider range for the parameters $\beta$ and $\alpha$.

%%%%%%%%%%%%%%%%%%%%%%%%%%%%%%%%%%%%%%%%%%%%%%%%%%%%%%%%%%%%%%%%%%%%%%%%%%
\section{Collapse of the rotating GP minimizer: proof of Theorem~\ref{thm:GP}}\label{sec:proof_GP}

In this section we provide the proof of Theorem~\ref{thm:GP}. It is convenient to work at the blow up scale, and thus to rewrite everything in terms of 
$$v(x)=\sqrt\eps\, u(\sqrt{\eps}x),\qquad \eps=\sqrt{a_*-a}.$$
Since the angular momentum $L$ commutes with dilations about the center of rotation, we get
\begin{align*}
\cE^{\rm GP}_{\Omega,a}(u)&=\frac1\eps \int_{\R^2}|\nabla v|^2+\eps\int_{\R^2}|x|^2|v(x)|^2\,dx-2\Omega\pscal{v,Lv}-\frac{a}{2\eps}\int_{\R^2}|v(x)|^4\,dx\\
&=\frac{\cF_{\Omega,\eps}(v)}\eps
\end{align*}
where
\begin{multline}
 \cF_{\Omega,\eps}(v)=\int_{\R^2}|\nabla v|^2-\frac{a_*}2\int_{\R^2}|v(x)|^4\,dx+\eps^2\int_{\R^2}|x|^2|v(x)|^2\,dx\\+\frac{\eps^2}2\int_{\R^2}|v(x)|^4\,dx-2\eps\Omega\pscal{v,Lv}.
\label{eq:def_F}
\end{multline}
We then introduce 
$$\boxed{F_\Omega(\eps)=\inf_{\|v\|_{L^2}=1}\cF_{\Omega,\eps}(v)=\sqrt{a_*-a}\;E^{\rm GP}_\Omega(a)}$$
and our goal is to prove that 
\begin{equation}
 F_\Omega(\eps)=F_0(\eps)+o(\eps^2)=\eps^2 \left( \frac{2\lambda_*^2}{a_*} + o(1)\right).
 \label{eq:reformulation_CV_eps}
\end{equation}
The behavior of $F_0(\eps)$ and its associated unique ground state is studied in~\cite{GuoSei-14}. However, even when $\Omega=0$ we have to prove the convergence of approximate ground states in the sense of~\eqref{eq:def_approx_GS}.

\subsubsection*{\bfseries Step 1. Convergence to a Gagliardo-Nirenberg optimizer.}
By rearrangement inequalities, $F_0(\eps)$ has a radial-decreasing minimizer $\tilde v_\eps$. Then $L\tilde v_\eps=0$, hence
$$F_\Omega(\eps)\leq F_0(\eps)$$
for every $0\leq\Omega<1$. It is the reverse inequality which is not obvious. From the diamagnetic inequality, we have
$$\int_{\R^2}|\nabla v(x)+i\eps\Omega x^\perp v(x)|^2\,dx\geq \int_{\R^2}|\nabla |v|(x)|^2\,dx$$
and therefore we obtain 
\begin{align*}
F_\Omega(\eps)&\geq \min_{\substack{v\in H^1(\R^d)\\ \|v\|_{L^2}=1}}\bigg\{ \int_{\R^2}|\nabla v|^2+(1-\Omega^2)\eps^2\int_{\R^2}|x|^2|v(x)|^2\,dx\\
&\qquad\qquad\qquad -\frac{a_*-\eps^2}2\int_{\R^2}|v(x)|^4\,dx\bigg\}\\
&=\sqrt{1-\Omega^2}\;F_0(\eps).
\end{align*}
This lower bound has the right behavior $O(\eps^2)$ but not the right constant.

We can also write the energy in a different form and obtain the following lower bound
\begin{align}
\cF_{\Omega,\eps}(v)=&(1-\Omega)\int_{\R^2}|\nabla v(x)|^2\,dx+\Omega\int_{\R^2}|\nabla v(x)+i\eps x^\perp v(x)|^2\,dx\nn\\
&-\frac{a_*}2\int_{\R^2}|v(x)|^4\,dx+(1-\Omega)\eps^2\int_{\R^2}|x|^2|v(x)|^2\,dx+\frac{\eps^2}2\int_{\R^2}|v(x)|^4\,dx\nn\\
\geq& \int_{\R^2}|\nabla |v(x)||^2\,dx-\frac{a_*}2\int_{\R^2}|v(x)|^4\,dx\nn\\
&\qquad+(1-\Omega)\eps^2\int_{\R^2}|x|^2|v(x)|^2\,dx+\frac{\eps^2}2\int_{\R^2}|v(x)|^4\,dx\nn\\
\geq& (1-\Omega)\eps^2\int_{\R^2}|x|^2|v(x)|^2\,dx+\frac{\eps^2}2\int_{\R^2}|v(x)|^4\,dx.\label{eq:a_priori_estimate_E_GP}
\end{align}
From these bounds we deduce that any sequence $\{v_\eps\}\subset H^1(\R^2)$ such that $\|v\|_{L^2}=1$ and $\cF_{\Omega,\eps}(v)=O(\eps^2)$ (for instance approximate ground states) is bounded in $H^1(\R^2)$, and that $|x|v_\eps$ is bounded in $L^2(\R^2)$. Such sequences are precompact in $L^p(\R^2)$ for all $2\leq p<\ii$. Therefore, up to a subsequence, we can pass to the limit $v_\eps\to v$ and obtain
\begin{equation}
\int_{\R^2}|\nabla v(x)|^2\,dx-\frac{a_*}2\int_{\R^2}|v(x)|^4\,dx=0\quad\text{with}\quad \int_{\R^2}|v|^2=1.
\label{eq:passing_limit}
\end{equation}
This means that $v$ belongs to the set of the Gagliardo-Nirenberg optimizers (up to a phase)
\begin{equation}
\cQ:=\Big\{Q_{\lambda,X}(x)=\lambda Q_*\big(\lambda(x-X)\big),\quad \lambda>0,\ X\in\R^2\Big\}.
\label{eq:def_cQ}
\end{equation}
Here $Q_*$ is the unique positive radial solution to the equation
$$-\Delta Q_*-a_*Q_*^3=-Q_*.$$
This solution necessarily satisfies $\int_{\R^2}Q_*^2=1$ and it is just given by 
$$Q_*=\|Q\|^{-1}Q=(a_*)^{-1/2}Q$$ 
where $-\Delta Q-Q^3=-Q$. Note that $Q_{\lambda,X}$ solves the equation $-\Delta Q_{\lambda,X}-a_*Q_{\lambda,X}^3=-\lambda^2Q_{\lambda,X}.$

Note that we know from the above arguments that $v_\eps$ converges to $Q_{\lambda,X}$ strongly in $L^2(\R^2)\cap L^4(\R^2)$ and that 
$$ \int_{\R^2}|\nabla v_\eps(x)|^2\,dx-\frac{a_*}2\int_{\R^2}|v_\eps(x)|^4\,dx \to 0.$$
It follows that 
$$ \int_{\R^2}|\nabla v_\eps(x)|^2\,dx \to \frac{a_*}2\int_{\R^2}|Q_{\lambda,X} (x)|^4\,dx = \int_{\R^2}|\nabla Q_{\lambda,X}(x)|^2\,dx $$
and thus that the limit is also strong in $H^1(\R^2)$. For later purposes, we choose the phase of $v_\eps$ such that $v_\eps$ is the closest to its limit:
$$\big\|v_\eps-Q_{\lambda,X}\big\|_{L^2}=\min_\theta \big\|e^{i\theta}v_\eps-Q_{\lambda,X}\big\|_{L^2}.$$
This gives the orthogonality condition on the imaginary part of $v_\eps$:
\begin{equation}
\int_{\R^2}Q_{\lambda,X}\,\mathrm{Im}\big(v_\eps\big)=0.
\label{eq:orthogonality_r}
\end{equation}

\medskip

We have up to now shown that any sequence $\{v_\eps\}$ such that $\cF_\Omega(v_\eps)=O(\eps^2)$ converges to an element of $\cQ$, up to a subsequence and a phase. This is optimal for sequences that have an energy of the order $O(\eps^2)$. In order to determine the possible values of $X$ and $\lambda$, we have to assume that $v_\eps$ is an approximate ground state of $F_\Omega(\eps)$. In the next three steps we actually assume $v_\eps$ is a true ground state, so that we can rely on the variational equation and get better estimates. We return to approximate ground states at the end of the proof.

\subsubsection*{\bfseries Step 2. Decay of ground states.} 
For $v_\eps$ a true ground state, the Euler-Lagrange equation takes the form
\begin{equation}
 -\Delta v_\eps-(a_*-\eps^2)|v_\eps|^2v_\eps+\eps^2|x|^2v_\eps-2\Omega\eps Lv_\eps+\mu_\eps v_\eps=0,
 \label{eq:equation_v_eps}
\end{equation}
with the Lagrange multiplier given by
\begin{equation} \label{eq:mu_eps}
\mu_\eps=-\cF_\Omega(v_\eps)+\frac{a_*-\eps^2}{2}\int_{\R^2}|v_\eps|^4\to \frac{a_*}{2}\int_{\R^2}Q_{\lambda,X}^4=\lambda^2>0.
\end{equation}
Using that $\lambda^2>0$ we shall obtain uniform decay estimates \emph{\`a la} Agmon~\cite{Agmon} for $v_\eps$. First we need to show that $v_\eps$ converges uniformly.

\begin{lemma}[\textbf{Uniform convergence}]\label{lem:H_2}\mbox{}\\
The sequence $\{v_\eps\}$ is bounded in $H^2(\R^2)$ and converges to $Q_{\lambda,X}$ strongly in $H^1(\R^2)$ and in $L^\infty(\R^2)$. 
\end{lemma}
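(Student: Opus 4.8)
The plan is to bootstrap from the $H^1$-convergence obtained in Step~1 up to a uniform $L^\infty$ bound, then to uniform pointwise exponential decay, and only afterwards to the $H^2$ bound and to $L^\infty$-convergence; recall that throughout we are dealing with true ground states, so that \eqref{eq:equation_v_eps} is available. I would use freely what is already known: $\{v_\eps\}$ is bounded in $H^1(\R^2)$ and $\||x|v_\eps\|_{L^2}$ is bounded (from \eqref{eq:a_priori_estimate_E_GP}), $v_\eps\to Q_{\lambda,X}$ strongly in $H^1(\R^2)\cap L^4(\R^2)$, and $\mu_\eps\to\lambda^2>0$ by \eqref{eq:mu_eps}, so that $\mu_\eps\ge\lambda^2/2$ for $\eps$ small; also $H^1(\R^2)\hookrightarrow L^p(\R^2)$ for every $p<\ii$.

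First, for the uniform $L^\infty$ bound I would rewrite the magnetic part of \eqref{eq:equation_v_eps} as $(-i\nabla+\eps\Omega x^\perp)^2=-\Delta-2\eps\Omega L+\eps^2\Omega^2|x|^2$, so that the equation reads
$$(-i\nabla+\eps\Omega x^\perp)^2 v_\eps+\big(\mu_\eps+(1-\Omega^2)\eps^2|x|^2\big)v_\eps=(a_*-\eps^2)|v_\eps|^2 v_\eps,$$
and apply the diamagnetic (Kato) inequality to obtain, in the distributional sense, $-\Delta|v_\eps|+\tfrac{\lambda^2}{2}|v_\eps|\le a_*|v_\eps|^3$. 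By positivity of the resolvent this yields $|v_\eps|\le a_*(-\Delta+\tfrac{\lambda^2}{2})^{-1}|v_\eps|^3$, and since $|v_\eps|^3$ is bounded in $L^2(\R^2)$ while $(-\Delta+\tfrac{\lambda^2}{2})^{-1}$ maps $L^2(\R^2)$ into $H^2(\R^2)\hookrightarrow L^\infty(\R^2)$, I get $\|v_\eps\|_{L^\infty}\le C$ uniformly. Then, for uniform exponential decay, $|v_\eps|$ is a nonnegative subsolution of $-\Delta u\le a_*\|v_\eps\|_{L^\infty}^2\,u$, so local elliptic estimates give $\sup_{B_1(x_0)}|v_\eps|\le C\|v_\eps\|_{L^2(B_2(x_0))}$, whose right-hand side tends to $0$ as $|x_0|\to\ii$ uniformly in $\eps$ because $\||x|v_\eps\|_{L^2}$ is bounded; hence there is $R$ independent of $\eps$ with $a_*|v_\eps|^2\le\lambda^2/4$ on $\{|x|>R\}$, so $-\Delta|v_\eps|+\tfrac{\lambda^2}{4}|v_\eps|\le0$ there, and comparison with the supersolution $Me^{\lambda R/4}e^{-\lambda|x|/4}$ of $-\Delta+\tfrac{\lambda^2}{4}$ on $\{|x|>R\}$ gives $|v_\eps(x)|\le Ce^{-\lambda|x|/4}$ with $C$ uniform in $\eps$.

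With the pointwise decay in hand, the $H^2$ bound follows in two moves. First, the decay makes $\||x|^2v_\eps\|_{L^2}$ bounded uniformly, and a weighted-energy (virial-type) identity — multiply \eqref{eq:equation_v_eps} by $|x|^2\bar v_\eps$, integrate by parts, use Cauchy--Schwarz on the angular-momentum contribution and drop the two terms of favorable (negative) sign — bounds $\||x|\nabla v_\eps\|_{L^2}$ in terms of $1$, $\|v_\eps\|_{L^\infty}^2\||x|v_\eps\|_{L^2}^2$ and $\||x|^2v_\eps\|_{L^2}^2$, all now under control; in particular $\|Lv_\eps\|_{L^2}\le\||x|\,\nabla v_\eps\|_{L^2}$ is bounded, so $\eps^2\||x|^2v_\eps\|_{L^2}\to0$ and $\eps\|Lv_\eps\|_{L^2}\to0$. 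Second, rewriting \eqref{eq:equation_v_eps} as $(-\Delta+1)v_\eps=(1-\mu_\eps)v_\eps+(a_*-\eps^2)|v_\eps|^2v_\eps+2\eps\Omega Lv_\eps-\eps^2|x|^2v_\eps$, the right-hand side is bounded in $L^2(\R^2)$ (the cubic term by the $L^\infty$ bound and $v_\eps\in L^2$), so elliptic regularity gives $\|v_\eps\|_{H^2}\le C$. Finally, for $L^\infty$-convergence I would note that $Q_{\lambda,X}\in H^2(\R^2)$ as well, so $v_\eps-Q_{\lambda,X}$ is bounded in $H^2(\R^2)$ and tends to $0$ in $H^1(\R^2)$; by interpolation it tends to $0$ in $H^{3/2}(\R^2)\hookrightarrow L^\infty(\R^2)$, whence $\|v_\eps-Q_{\lambda,X}\|_{L^\infty}\to0$.

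The main obstacle is the $H^2$ bound: the two $\eps$-dependent but spatially unbounded terms $\eps^2|x|^2v_\eps$ and $2\eps\Omega Lv_\eps$ in \eqref{eq:equation_v_eps} are not controlled by the a priori $H^1$ and weighted-$L^2$ bounds alone, and can only be tamed once the uniform $L^\infty$ bound and the uniform pointwise decay are available — which is exactly why uniform convergence must be established before the Agmon-type weighted estimates of the next step. (A minor technical point is that the comparison argument uses that $|v_\eps|$, being a bounded $H^1$ subsolution, vanishes at infinity, which itself follows from the local sup bound above.)
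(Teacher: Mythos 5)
Your argument is correct, but it takes a genuinely different route from the paper. The paper's proof is purely operator-theoretic: writing the equation as $(-\Delta+\eps^2|x|^2-2\Omega\eps L+\lambda^2)v_\eps=(\lambda^2-\mu_\eps)v_\eps+(a_*-\eps^2)|v_\eps|^2v_\eps$ with an $L^2$-bounded right-hand side, it uses the Cauchy--Schwarz bound $2\eps|L|\le-\Delta+\eps^2|x|^2$ together with the commutation of $L$ with $-\Delta+\eps^2|x|^2$ and the hypothesis $\Omega<1$ to show, via the resolvent formula, that $\left(-\Delta+\lambda^2/2\right)\left(-\Delta+\eps^2|x|^2-2\Omega\eps L+\lambda^2\right)^{-1}$ is bounded uniformly in $\eps$; the $H^2$ bound then follows in one stroke, and $H^1$ and $L^\infty$ convergence by interpolation, with no pointwise information needed. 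You instead go through a chain of pointwise estimates: Kato's diamagnetic inequality to get $-\Delta|v_\eps|+\tfrac{\lambda^2}{2}|v_\eps|\le a_*|v_\eps|^3$ and hence a uniform $L^\infty$ bound, local subsolution estimates plus the weighted $L^2$ bound to get uniform pointwise exponential decay, and a virial identity to control $\||x|\nabla v_\eps\|_{L^2}$ and thereby the terms $\eps^2|x|^2v_\eps$ and $\eps Lv_\eps$ in $L^2$. This is longer and leans on the maximum principle and positivity of the resolvent (each step is standard and checks out), but it buys you more: your pointwise decay $|v_\eps|\le Ce^{-\lambda|x|/4}$ is stronger than, and essentially subsumes, the integrated Agmon estimate the paper proves separately in the next lemma. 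One inaccuracy in your closing remark: it is not true that the unbounded terms ``can only be tamed'' after the $L^\infty$ bound and pointwise decay are available --- the paper tames them directly from the a priori $H^1$ and weighted-$L^2$ information by the operator inequality above, which is precisely the point of its shorter argument.
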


\begin{proof}
We have
$$ \left(-\Delta +\eps^2|x|^2-2\Omega\eps L+\lambda^2\right) v_\eps=(\lambda^2-\mu_\eps) v_\eps+(a_*-\eps^2)|v_\eps|^2v_\eps$$
where the right side is bounded in $L^2(\R^2)$. By the Cauchy-Schwarz inequality and the fact that $L$ commutes with $-\Delta +\eps^2|x|^2$, we have
$$2\eps |L|\leq -\Delta +\eps^2|x|^2$$
and 
$$\left(2\Omega\eps L\right)^2\leq \Omega^2\left(-\Delta +\eps^2|x|^2\right)^2$$
or, equivalently,
\begin{equation}
\norm{2\Omega\eps L\left(-\Delta +\eps^2|x|^2\right)^{-1}}\leq \Omega<1.
\label{eq:control_L}
\end{equation}
By the resolvent formula this proves that 
$$\norm{\left(-\Delta +\eps^2|x|^2\right)\left(-\Delta +\eps^2|x|^2-2\Omega\eps L\right)^{-1}}\leq \frac{1}{1-\Omega}$$
and similarly that
$$\norm{\left(-\Delta +\eps^2|x|^2+\lambda^2\right)\left(-\Delta +\eps^2|x|^2-2\Omega\eps L+\lambda^2\right)^{-1}}\leq \frac{1}{1-\Omega}.$$
Using the relation
\begin{multline*}
\norm{(-\Delta +\eps^2|x|^2+\lambda^2)u}_{L^2(\R^2)}^2=\int_{\R^2}|\Delta u(x)|^2\,dx+\int_{\R^2}(\eps^2|x|^2+\lambda^2)^2|u(x)|^2\,dx\\
+2\int_{\R^2}(\eps^2|x|^2+\lambda^2)|\nabla u(x)|^2\,dx-4\eps^2\int_{\R^2}|u(x)|^2\,dx 
\end{multline*}
we get
$$(-\Delta +\eps^2|x|^2+\lambda^2)^2\geq (-\Delta+\lambda^2)^2-4\eps^2\geq (-\Delta+\lambda^2/2)^2$$
for $\eps\leq\sqrt{3}\lambda/4$, and thus 
$$\norm{\left(-\Delta +\lambda^2/2\right)\left(-\Delta +\eps^2|x|^2-2\Omega\eps L+\lambda^2\right)^{-1}}\leq \frac{1}{1-\Omega}.$$
Inserting in Equation~\eqref{eq:equation_v_eps}, this proves that $v_\eps$ is bounded in $H^2(\R^2)$. Since $v_\eps$ already converges strongly in $L^2(\R^2)$, it also converges strongly in $H^1(\R^2)$ and in $L^\ii(\R^2)$, by interpolation. 
\end{proof}

Next we can prove an exponential decay estimate. We need it later only to obtain the strong convergence $|x|v_\eps\to |x|Q_{\lambda,X}$  in $L^2(\R^2)$.

\begin{lemma}[\textbf{Exponential decay}]\label{lem:decay}\mbox{}\\
The function $v_\eps$ satisfies
$$\int_{\R^2}e^{\lambda|x|}|v_\eps(x)|^2\,dx+\int_{\R^2}|\nabla e^{\frac{\lambda}2|x|}v_\eps|^2\leq C,$$
for a constant $C$ independent of $\eps$. In particular, $|x|v_\eps\to |x|Q_{\lambda,X}$ strongly in $L^2(\R^2)$.
\end{lemma}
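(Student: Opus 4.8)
The estimate is a uniform Agmon-type exponential decay bound, and the plan is to obtain it by a weighted energy identity for the Euler--Lagrange equation \eqref{eq:equation_v_eps}, in the classical spirit of Agmon, using two facts already at hand: the uniform convergence $v_\eps\to Q_{\lambda,X}$ in $L^\infty(\R^2)$ (Lemma~\ref{lem:H_2}) and the positivity $\mu_\eps\to\lambda^2>0$ (see \eqref{eq:mu_eps}). Since $Q$ decays exponentially, $a_*|Q_{\lambda,X}(x)|^2\to0$ as $|x|\to\ii$, so there are $R_0>0$ and $\eps_0>0$ such that $(a_*-\eps^2)|v_\eps(x)|^2\le\lambda^2/4$ whenever $|x|\ge R_0$ and $\eps\le\eps_0$, while $\mu_\eps\ge 3\lambda^2/4$ for $\eps$ small. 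These are exactly the ingredients that will make the weight $e^{\lambda|x|}$ admissible.

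Next I introduce a bounded radial Lipschitz weight $\phi_n$ with $|\nabla\phi_n|\le\lambda/2$ and $\phi_n\uparrow\frac{\lambda}{2}|x|$ (for instance $\phi_n(x)=\frac{\lambda}{2}\min\{|x|,n\}$), so that $e^{\phi_n}v_\eps\in H^1(\R^2)$ and everything below is legitimate at the level of quadratic forms. Testing \eqref{eq:equation_v_eps} against $e^{2\phi_n}\overline{v_\eps}$ and taking the real part, I use the standard identity $\mathrm{Re}\int e^{2\phi_n}\overline{v_\eps}(-\Delta v_\eps)=\int|\nabla(e^{\phi_n}v_\eps)|^2-\int|\nabla\phi_n|^2 e^{2\phi_n}|v_\eps|^2$, the fact that the radial weight $e^{2\phi_n}$ commutes with $L$, so the angular term becomes $-2\Omega\eps\langle e^{\phi_n}v_\eps,L(e^{\phi_n}v_\eps)\rangle$, and the operator inequality $2\eps|L|\le-\Delta+\eps^2|x|^2$ already used in the proof of Lemma~\ref{lem:H_2} to bound this term in absolute value by $\Omega\big(\int|\nabla(e^{\phi_n}v_\eps)|^2+\eps^2\int|x|^2e^{2\phi_n}|v_\eps|^2\big)$. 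This yields
\begin{multline*}
(1-\Omega)\int_{\R^2}|\nabla(e^{\phi_n}v_\eps)|^2+(1-\Omega)\eps^2\int_{\R^2}|x|^2e^{2\phi_n}|v_\eps|^2\\
+\int_{\R^2}\big(\mu_\eps-|\nabla\phi_n|^2-(a_*-\eps^2)|v_\eps|^2\big)e^{2\phi_n}|v_\eps|^2\le 0.
\end{multline*}

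To conclude I split the last integral at $|x|=R_0$. On $\{|x|\ge R_0\}$ the coefficient is $\ge 3\lambda^2/4-\lambda^2/4-\lambda^2/4=\lambda^2/4>0$, so that part of the integral is nonnegative; on $\{|x|<R_0\}$ the coefficient is bounded and $e^{2\phi_n}\le e^{\lambda R_0}$, so that contribution is bounded by $C e^{\lambda R_0}\int|v_\eps|^2=C e^{\lambda R_0}$, uniformly in $\eps$ and $n$ (using $\|v_\eps\|_{L^\infty}\le C$). Rearranging gives $\int e^{2\phi_n}|v_\eps|^2\le C$, and then, going back into the inequality, $\int|\nabla(e^{\phi_n}v_\eps)|^2\le C$ and $\eps^2\int|x|^2e^{2\phi_n}|v_\eps|^2\le C$, all uniform. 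Letting $n\to\ii$ and using monotone convergence / Fatou yields $\int e^{\lambda|x|}|v_\eps|^2+\int|\nabla(e^{\frac{\lambda}{2}|x|}v_\eps)|^2\le C$. Finally, since $|x|^2e^{-\lambda|x|}\to0$ at infinity, the bound on $\int e^{\lambda|x|}|v_\eps|^2$ makes $\{|x|^2|v_\eps|^2\}$ uniformly integrable; combined with $v_\eps\to Q_{\lambda,X}$ in $L^2_{\mathrm{loc}}$ (from the $L^\infty$ convergence) and $|x|Q_{\lambda,X}\in L^2(\R^2)$, this gives $|x|v_\eps\to|x|Q_{\lambda,X}$ strongly in $L^2(\R^2)$.

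The one genuinely delicate point is justifying the weighted testing and the handling of the $L$-term given that a priori one knows only $v_\eps\in H^2(\R^2)$ with $|x|v_\eps\in L^2(\R^2)$ (not, say, $|x|^2v_\eps\in L^2$): working throughout with the bounded cut-off weights $\phi_n$ and reading the identity as an identity between quadratic forms — for which the pairing with $L$ only requires $w\in H^1$ with $|x|w\in L^2$ — removes this difficulty, and the passage $\phi_n\uparrow\frac{\lambda}{2}|x|$ is then routine. A secondary (and harmless) point is checking that the rate $\lambda/2$ in the exponent is admissible, i.e. that $\mu_\eps$ exceeds $(\lambda/2)^2$ with enough room to absorb $(a_*-\eps^2)|v_\eps|^2$ at infinity; this is precisely what the choice of $R_0$ and the convergence $\mu_\eps\to\lambda^2$ ensure.
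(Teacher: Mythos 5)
Your proof is correct and follows essentially the same route as the paper: testing the Euler--Lagrange equation \eqref{eq:equation_v_eps} against $e^{\alpha|x|}\overline{v_\eps}$ with $\alpha=\lambda$, controlling the angular term via $2\eps|L|\le-\Delta+\eps^2|x|^2$ applied to $e^{\frac{\lambda}{2}|x|}v_\eps$, and using $\mu_\eps\to\lambda^2$ together with the uniform convergence of $v_\eps$ to make the zeroth-order coefficient positive outside a fixed ball. The only difference is that you justify the weighted testing through the truncated weights $\phi_n$ and a passage to the limit $n\to\infty$, whereas the paper simply invokes the known rapid decay of $v_\eps$ at fixed $\eps$; this is a (welcome) refinement of the same argument rather than a different approach.
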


\begin{proof}
It is well known that $v_\eps$ is analytic with all its derivatives decaying fast at infinity. We seek here for an explicit bound, independent of $\eps$. We use that
\begin{align*}
-\mathrm{Re}\pscal{v_\eps,e^{\alpha|x|}\Delta v_\eps}&=-\frac12\int_{\R^2}e^{\alpha|x|}\Big(\overline{v_\eps}(x)\Delta v_\eps(x)+v_\eps(x)\Delta \overline{v_\eps}(x)\Big)dx\\
&=-\frac12\int_{\R^2}e^{\alpha|x|}\Big(\Delta|v_\eps|^2(x)-2|\nabla v_\eps(x)|^2\Big)dx\\
&=-\frac12\int_{\R^2}e^{\alpha|x|}\left(\left(\frac{\alpha}{|x|}+\alpha^2\right)|v_\eps(x)|^2-2|\nabla v_\eps(x)|^2\right)dx\\
&=\int_{\R^2}e^{\alpha|x|}|\nabla v_\eps|^2-\frac12\int_{\R^2}e^{\alpha|x|}\left(\frac{\alpha}{|x|}+\alpha^2\right)|v_\eps(x)|^2\,dx\\
&=\int_{\R^2}|\nabla e^{\frac{\alpha}2|x|}v_\eps|^2-\frac{\alpha^2}4\int_{\R^2}e^{\alpha|x|}|v_\eps(x)|^2\,dx.
\end{align*}
Then we integrate the Euler-Lagrange equation~\eqref{eq:equation_v_eps} against $e^{\alpha|x|}\overline{v_\eps}$ and obtain
\begin{align*}
0&=\int_{\R^2}|\nabla e^{\frac{\alpha}2|x|}v_\eps|^2+\int_{\R^2}e^{\alpha|x|}\left(\eps^2|x|^2-(a_*-\eps^2)|v_\eps|^2+\mu_\eps-\frac{\alpha^2}4\right)|v_\eps(x)|^2\,dx\\
&\qquad -2\Omega\eps\pscal{e^{\frac{\alpha}2|x|}v_\eps,Le^{\frac{\alpha}2|x|}v_\eps}\\
&\geq (1-\Omega)\int_{\R^2}|\nabla e^{\frac{\alpha}2|x|}v_\eps(x)|^2\,dx+(1-\Omega)\eps^2\int_{\R^2}e^{\alpha|x|}|x|^2|v_\eps(x)|^2\,dx\\
&\qquad +\int_{\R^2}e^{\alpha|x|}\left(\mu_\eps-(a_*-\eps^2)|v_\eps|^2-\frac{\alpha^2}4\right)|v_\eps(x)|^2\,dx.
\end{align*}
Choosing $\alpha=\lambda$ and using the uniform convergence of $v_\eps$ towards $Q_{\lambda,X}$, we can find a radius $R$ independent of $\eps$ such that 
$$\mu_\eps-(a_*-\eps^2)|v_\eps|^2-\frac{\alpha^2}4\geq\frac{\lambda^2}{2},\qquad \forall\,|x|\geq R,$$
and then 
\begin{equation*}
\frac{\lambda^2}2\int_{\R^2\setminus B_R}e^{\lambda|x|}|v_\eps(x)|^2\,dx+(1-\Omega)\int_{\R^2}\big|\nabla (e^{\frac{\lambda}2|x|}v_\eps)(x)\big|^2\,dx
\leq e^{\lambda R}\left(\mu_\eps+\frac{\lambda^2}{4}+a_*\norm{v_\eps}^2_{L^\ii(B_R)}\right)
\end{equation*}
for all $\eps>0$ small enough. This proves the desired exponential decay estimate. The strong convergence of $|x|v_\eps$ in $L^2(\R^2)$ then follows by interpolation. 
\end{proof}

\subsubsection*{\bfseries Step 3. The imaginary part is (very) small.} 
We split $v_\eps$ into real and imaginary parts
$$ v_\eps = q_\eps + i r_\eps$$
and get bounds on $r_\eps$ using energy estimates (we could similarly use the equation~\eqref{eq:equation_v_eps}). 
Recalling $x^\perp=(-x_2,x_1)$, we observe that
\begin{equation} \label{eq:last-up-a}
\pscal{v_\eps,L v_\eps}= \int_{\R^2}  x^{\perp} \cdot \mathrm{Im} (\overline{v_\eps} \nabla v_\eps) = \int_{\R^2}  x^{\perp} \cdot  (q_\eps \nabla r_\eps-r_\eps \nabla q_\eps) = 2 \int_{\R^2}  x^{\perp} \cdot  q_\eps \nabla r_\eps
\end{equation}
where we have integrated by parts and used that $\mathrm{div}\, x^{\perp} = 0$. Thus 
$$ \left| \pscal{v_\eps,L v_\eps}\right| \leq C \norm{\nabla r_\eps}_{L^2}. $$
Here we have used the fact that $|x|q_\eps$ is bounded in $L^2(\R^2)$. Then the energy reads
$$
\cF_{\Omega,\eps}(v_\eps) \ge \int_{\R ^2} |\nabla q_\eps| ^2 + \int_{\R ^2} |\nabla r_\eps| ^2 - \frac{a_*}{2} \int_{\R^2} \left( q_\eps ^4 +r_\eps^4+   2q_\eps^2 r_\eps ^2 \right)- C \eps \norm{\nabla r_\eps}_{L^2}.
$$
Since $q_\eps\to Q_{\lambda,X}$ and $r_\eps\to 0$ uniformly by Lemma \ref{lem:H_2}, we obtain
$$
\int_{\R^2} |q_\eps ^2 - Q_{\lambda,X} ^2|  r_\eps^2  + \int_{\R^2} r_\eps^4 = o\big(\norm{r_\eps}_{L^2} ^2\big).
$$
Moreover, using the Gagliardo-Nirenberg inequality \eqref{eq:GN} for the real part $q_\eps$, we have 
\begin{align*} %\label{eq:GN real}
\int_{\R^2} |\nabla q_\eps|^2 - \frac{a_*}{2}\int_{\R^2}|q_\eps|^4 \ge \left( \int_{\R^2} |\nabla q_\eps|^2 \right) \left( 1- \int_{\R^2}|q_\eps|^2\right) = (\lambda^2+o(1))\int_{\R^2}|r_\eps|^2.
\end{align*}
Here in the second estimate we have used the facts that $\|q_\eps\|_{L^2}^2+\|r_\eps\|_{L^2}^2=1$ and that $q_\eps\to Q_{\lambda,X}$ strongly in $H^1(\R^2)$ by Lemma \ref{lem:H_2}. Thus we can bound the energy from below as  
\begin{equation} \label{eq:Energy-01}
\cF_{\Omega,\eps}(v_\eps) \geq  \int_{\R ^2} |\nabla r_\eps| ^2 -  a_* \int_{\R^2} Q_{\lambda,X}^2 r_\eps ^2 + (\lambda^2+o(1)) \int_{\R^2} |r_\eps|^2  - C \eps \norm{\nabla r_\eps}_{L^2}.
\end{equation}
Now we use some non-degeneracy property of $Q_{\lambda,X}$~\cite{Weinstein-85,McLeod-93,ChaGusNakTsa-08,Frank-13}.
Since $Q_{\lambda,X}$ is positive, it must be the first eigenfunction of the operator 
\begin{equation}
\cL_-:=-\Delta -a_* Q_{\lambda,X} ^2 +\lambda^2
\label{eq:L_+_-}
\end{equation}
and the corresponding eigenvalue $0$ is non-degenerate~\cite[Cor.~11.9]{LieLos-01}. In particular, we get 
\begin{equation}
\pscal{f,\cL_-f}_{L^2(\R ^2)}\geq \lambda_2\norm{f}_{L^2(\R^2)}^2
\label{eq:non-degenerate}
\end{equation}
for all $f$ orthogonal to $Q_{\lambda,X}$ where $\lambda_2>0$ is the second eigenvalue of $\cL_-$. Since on the other hand
\begin{equation}
 \pscal{f,\cL_-f}_{L^2(\R ^2)}\geq \norm{\nabla f}_{L^2(\R^2)}^2-a_*\norm{Q_{\lambda,X}}_{L^\ii(\R^2)}^2\norm{f}_{L^2(\R^2)}^2,
 \label{eq:lower_bound_cL}
\end{equation}
we may combine~\eqref{eq:non-degenerate} with~\eqref{eq:lower_bound_cL} (add a large constant times the second inequality to the first one) and obtain the well known estimate~\cite{Weinstein-85}
\begin{equation}
\pscal{f,\cL_-f}_{L^2(\R ^2)}\geq c\norm{f}_{H^1(\R^2)}^2
 \label{eq:non-degenerate_lower_bound}
\end{equation}
for a constant $c>0$ and all $f$ orthogonal to $Q_{\lambda,X}$. 
Inserting \eqref{eq:non-degenerate_lower_bound} in \eqref{eq:Energy-01} using the fact that $r_\eps$ is orthogonal to $Q_{\lambda,X}$ as we have seen in~\eqref{eq:orthogonality_r}, we obtain 
$$
\cF_{\Omega,\eps}(v_\eps) \geq c_1 \norm{r_\eps}_{H^1}^2 - C \eps \norm{\nabla r_\eps }_{L^2}
 $$
 for a constant $c_1>0$. Combining with the energy upper bound $\cF_{\Omega,\eps}(v_\eps)=O(\eps^2)$ we conclude that 
\begin{equation}\label{eq:bound im}
\norm{r_\eps}_{H^1 (\R^2)} \leq C \eps. 
\end{equation}

\subsubsection*{\bfseries Step 4. Change of gauge and convergence of ground states.} Since $|x|q_\eps$ converges to $|x|Q_{\lambda,X}$ strongly in $L^2$ by Lemma~\ref{lem:decay}, we deduce from \eqref{eq:last-up-a} and ~\eqref{eq:bound im} that 
$$ 
\pscal{v_\eps,L v_\eps}= 2 \int_{\R^2}  x^{\perp} \cdot  Q_{\lambda,X} \nabla r_\eps + o (\eps) = - 2 \int_{\R^2}  x^{\perp} \cdot  \nabla Q_{\lambda,X}  r_\eps + o (\eps).
$$
But $Q_{\lambda,X} = \lambda Q_* (\lambda (x-X))$ with $Q_*$ a radial function, hence
$$ \left(x^{\perp} - X^{\perp}\right) \cdot \nabla Q_{\lambda,X} = 0.$$
Inserting in the above, using~\eqref{eq:bound im} and strong $L^2$-convergence of $|x|q_\eps$ again we obtain   
$$
\pscal{v_\eps,L v_\eps}= 2 \int_{\R^2}  X^{\perp} \cdot  q_\eps \nabla r_\eps + o (\eps).
$$ 
Inserting this in the energy gives 
\begin{multline*}
 \cF_{\Omega,\eps}(v_\eps) = \int_{\R^2} |\nabla v_\eps| ^2 - \frac{a_*}{2} \int_{\R^2} |v_\eps| ^4 + 2 \eps \Omega \int_{\R^2}  X^{\perp} \cdot \mathrm{Im} (v_\eps \nabla \bar{v}_\eps) \\
 + \eps ^2 \left( \frac{1}{2} \int_{\R^2} |v_\eps| ^4 + \int_{\R^2} |x| ^2 |v_\eps| ^2 \right) + o (\eps ^2).
\end{multline*}
Now, define a new function $f_\eps$ by setting  
$$ v_\eps (x)= f_\eps (x) \: e ^{i \eps \Omega X^{\perp} \cdot x}$$
and observe that 
$$ \int_{\R^2} |\nabla v_\eps| ^2 + 2 \eps \Omega \int_{\R^2}  X^{\perp} \cdot \mathrm{Im} (v_\eps \nabla \bar{v}_\eps) = \int_{\R^2} |\nabla f_\eps| ^2 - \eps ^2 \Omega ^2 |X| ^2 \int_{\R^2}  |f_{\eps }|^2.$$
Using the optimal Gagliardo-Nirenberg inequality again and the convergence $f_\eps\to Q_{\lambda,X}$, we obtain 
$$ 
\liminf_{\eps \to 0} \frac{\cF_{\Omega,\eps}(v_\eps)}{\eps ^2} \geq \frac{1}{2} \int_{\R^2} Q_{\lambda,X} ^4 + \int_{\R^2} \left( |x| ^2 - \Omega ^2 |X|^2 \right) Q_{\lambda,X} ^2. 
$$
Recalling that $Q_{\lambda,X}= \lambda Q_* (\lambda (x-X))$ for a radial function $Q_*$ this finally yields
$$ \liminf_{\eps \to 0} \frac{\cF_{\Omega,\eps}(v_\eps)}{\eps ^2}\geq (1-\Omega^2)|X|^2 + \frac{1}{\lambda^2}\int_{\R^2}|x|^2|Q_*(x)|^2\,dx+\frac{\lambda^2}2\int_{\R^2}|Q_*(x)|^4\,dx.$$
Since $\Omega^2<1$, the minimum of the right side is attained for $X=0$ and $\lambda=\lambda_*$ as in~\cite{GuoSei-14}. This also concludes the proof of~\eqref{eq:reformulation_CV_eps}, hence of~\eqref{eq:convergence_energy}. Also, this shows that any sequence of minimizers must, modulo rescaling, choice of a constant phase (in~\eqref{eq:orthogonality_r}) and passing to a subsequence, converge strongly in $H^1$ to $Q_{\lambda^*,0}$. By uniqueness of the limit we conclude that passing to a subsequence is unecessary, which concludes the proof of~\eqref{eq:approx_GS} for true ground states.

\subsubsection*{\bfseries Step 5. Convergence of approximate ground states for $\Omega=0$}
Here we assume $\Omega=0$ and show that any sequence $\{v_\eps\}$ such that 
$$\cF_{0,\eps}(v_\eps)=F_0(\eps)+o(\eps^2)$$
must converge to $Q_{\lambda_*,0}$, as we have proved before for the exact minimizers, using the Euler-Lagrange equation. Indeed, from Step 1 we already know that $v_\eps\to Q_{\lambda,X}$ after extraction of a subsequence and choice of a good phase. Hence we have 
\begin{align*}
 \cF_{0,\eps}(v_\eps)\geq& \eps^2\left(\int_{\R^2}|x|^2|v(x)|^2\,dx+\frac{1}2\int_{\R^2}|v(x)|^4\,dx\right)\\
 \geq& \eps^2\left(\int_{\R^2}|x|^2Q_{\lambda,X}(x)^2\,dx+\frac{1}2\int_{\R^2}Q_{\lambda,X}(x)^4\,dx\right)+o(\eps^2)\\
 =&\eps^2\left(|X|^2+\frac1{\lambda^2}\int_{\R^2}|x|^2Q_*(x)^2\,dx+\frac{\lambda^2}2\int_{\R^2}Q_*(x)^4\,dx\right)+o(\eps^2).
\end{align*}
Again, the minimum of the term in the parenthesis is attained uniquely for $X=0$ and $\lambda=\lambda_*$. Since
$$\cF_{0,\eps}(v_\eps)=F_0(\eps)+o(\eps^2)=\eps^2\left(\frac1{\lambda_*^2}\int_{\R^2}|x|^2Q_*(x)^2\,dx+\frac{\lambda_*^2}2\int_{\R^2}Q_*(x)^4\,dx\right)$$
we must have $X=0$ and $\lambda=\lambda_*$. The limit being unique, the whole sequence must converge to $Q_{\lambda_*,0}$. By usual arguments the limit must be strong in $H^1(\R^2)$.

\subsubsection*{\bfseries Step 6. Convergence of approximate ground states for $0<\Omega<1$}
Next we turn to the rotating case $\Omega<1$. As before we take a sequence $\{v_\eps\}$ such that  $\cF_{\Omega,\eps}(v_\eps)=F_\Omega(\eps)+o(\eps^2)=F_0(\eps)+o(\eps^2)$.
We artificially increase the rotation speed by choosing a $\eta<1$ be such that $\Omega/\eta<1$ and we remark that
\begin{align*}
F_\Omega(\eps)+o(\eps^2)=\cF_{\Omega,\eps}(v_\eps)&=\eta\cF_{\Omega/\eta,\eps}(v_\eps)+(1-\eta)\cF_{0,\eps}(v_\eps)\\
&\geq\eta F_{\Omega/\eta}(\eps)+(1-\eta)F_{0}(\eps).
\end{align*}
Since the terms $\eps^{-2}F_\Omega(\eps)$, $\eps^{-2}F_{\Omega/\eta}(\eps)$ and $\eps^{-2}F_0(\eps)$ all have the same limit, this proves that 
$\cF_{\Omega/\eta,\eps}(v_\eps)=F_{\Omega/\eta}(\eps)+o(\eps^2)$ and $\cF_{0,\eps}(v_\eps)=F_{0}(\eps)+o(\eps^2)$. But then $\{v_\eps\}$ is a sequence of approximate ground states  in the case $\Omega=0$ and we can apply the previous step. 

\medskip

This concludes the proof of Theorem~\ref{thm:GP}.\qed

%%%%%%%%%%%%%%%%%%%%%%%%%%%%%%%%%%%%%%%%%%%%%%
%%%%%%%%%%%%%%%%%%%%%%%%%%%%%%%%%%%%%%%%%%%%%%
\section{Collapse of the many-body ground state: proof of Theorem~\ref{thm:cv-nls}} 
%%%%%%%%%%%%%%%%%%%%%%%%%%%%%%%%%%%%%%%%%%%%%%
%%%%%%%%%%%%%%%%%%%%%%%%%%%%%%%%%%%%%%%%%%%%%%

This section is devoted to the proof of Theorem~\ref{thm:cv-nls}. We start with the convergence of the ground state energy, and then settle some energy estimates for the ground state.

\subsubsection*{\bfseries Step 1. Convergence of the many-body ground state energy}
We provide the proof of the convergence of $E^{\rm Q}_{\Omega,a_N}(N)$, using a method described in~\cite[Section~3]{Lewin-15}. The precise statement is the following. 

\begin{proposition}[\textbf{Convergence of the many-body ground state energy}] \label{prop:GSE-1}\mbox{}\\
Let $\beta\in (0,1/2)$ and $a_*-a_N = N^{-\alpha}$ with 
\begin{equation}
0<\alpha< \min\Big\{\frac{4}{5}\beta,2(1-2\beta)\Big\}.
\label{eq:better_condition_alpha}
\end{equation}
Then 
\begin{align}\label{eq:eN}
E^{\rm Q}_{\Omega,a_N}(N)=E^{\rm GP}_\Omega(a_N)+o\big(E^{\rm GP}_\Omega(a_N)\big)=  \sqrt{a_*-a_N} \left( \frac{2\lambda^2}{a_*}+ o(1)\right).
\end{align}
\end{proposition}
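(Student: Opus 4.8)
\medskip
\noindent\textbf{Proof strategy.} The plan is to squeeze $E^{\rm Q}_{\Omega,a_N}(N)$ between $E^{\rm H}_{\Omega,a_N}(N)$ from above and $E^{\rm H}_{\Omega,a_N}(N)-o(\sqrt{a_*-a_N})$ from below, and then to identify $E^{\rm H}_{\Omega,a_N}(N)$ with $E^{\rm GP}_\Omega(a_N)$ up to a relative $o(1)$. The upper bound $E^{\rm Q}_{\Omega,a_N}(N)\le E^{\rm H}_{\Omega,a_N}(N)$ is trivial (trial state $u^{\otimes N}$), so the crux is a matching many-body lower bound, which I would obtain by a quantum de Finetti argument in the spirit of \cite[Section~3]{Lewin-15}; the identification of the Hartree and Gross--Pitaevskii energies rests on revisiting the proof of Theorem~\ref{thm:GP}.

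\medskip
\noindent\emph{Step A (the Hartree energy).} Working at the blow-up scale $\eps=\sqrt{a_*-a_N}$, a normalized $u$ is replaced by $v(x)=\sqrt\eps\,u(\sqrt\eps x)$, and $\eps\,\cE^{\rm H}_{\Omega,a_N,N}(u)=\cF^{\rm resc}_{\Omega,\eps,N}(v):=\cF_{\Omega,\eps}(v)-\tfrac{a_N}{2}\big(\iint_{\R^2\times\R^2} v_N(x-y)|v(x)|^2|v(y)|^2\,dx\,dy-\int_{\R^2}|v|^4\big)$, where $v_N(x)=N^{2\beta'}w(N^{\beta'}x)$ with $\beta'=\beta-\tfrac\alpha4>0$ and $\int_{\R^2}v_N=1$. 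I would bound the interaction-smearing correction in Fourier variables by $\||x|w\|_{L^1}N^{-\beta'}\,\big\|\,|v|^2\big\|_{\dot H^{1/2}}^2$. For $v=Q_{\lambda_*,0}$ one has $\cF_{\Omega,\eps}(Q_{\lambda_*,0})=\eps^2\tfrac{2\lambda_*^2}{a_*}$ (the Gagliardo--Nirenberg equality cancels the leading terms, $\pscal{Q_{\lambda_*,0},LQ_{\lambda_*,0}}=0$ by radiality, and one uses the scaling of $Q_*$, exactly as in the evaluation of $F_0(\eps)$), while the correction is $O(N^{-\beta'})=o(\eps^2)$ precisely when $\alpha<\tfrac45\beta$; this gives $E^{\rm H}_{\Omega,a_N}(N)\le\sqrt{a_*-a_N}\,\tfrac{2\lambda_*^2}{a_*}$. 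For the matching lower bound I would take a genuine Hartree minimizer $u_N$ (it exists by the direct method) and re-run Steps~1--4 of Section~\ref{sec:proof_GP} for $\cF^{\rm resc}_{\Omega,\eps,N}$ in place of $\cF_{\Omega,\eps}$: the diamagnetic bound yields the $H^1$ and weighted $L^2$ bounds, the Euler--Lagrange equation (whose extra nonlocal term $v_N*|u_N|^2$ is only \emph{smoother} than $|u_N|^2$) yields the $H^2$, $L^\infty$ and exponential-decay bounds of Lemmas~\ref{lem:H_2}--\ref{lem:decay}, hence $\big\|\,|v_{N}|^2\big\|_{\dot H^{1/2}}^2=O(1)$ in rescaled variables so that the smearing term is again $o(\eps^2)$, and the non-degeneracy of $Q_{\lambda,X}$ together with the gauge change fixes the profile, giving $\eps\,E^{\rm H}_{\Omega,a_N}(N)=F_\Omega(\eps)+o(\eps^2)=\eps^2\big(\tfrac{2\lambda_*^2}{a_*}+o(1)\big)$. (Taking $u=Q_N$ in the upper bound also gives $E^{\rm GP}_\Omega(a_N)\le\sqrt{a_*-a_N}\,\tfrac{2\lambda_*^2}{a_*}$, which together with Theorem~\ref{thm:GP} forces $E^{\rm GP}_\Omega(a_N)=\sqrt{a_*-a_N}(\tfrac{2\lambda_*^2}{a_*}+o(1))$, so all equalities in \eqref{eq:eN} will follow once Step~B is done.)

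\medskip
\noindent\emph{Step B (lower bound on the many-body energy).} Let $\Psi_N$ be a ground state of $H_N$. Since $\alpha<2(1-2\beta)$ is the same as $\beta<\tfrac12-\tfrac\alpha4$, the system is stable \cite{LewNamRou-17b}, and combined with the Step~A upper bound this gives, after rescaling by $\sqrt{a_*-a_N}$, the a priori bounds $\Tr[(-\Delta)\widetilde\gamma^{(1)}_{\Psi_N}]=O(1)$ and $\Tr[|x|^2\widetilde\gamma^{(1)}_{\Psi_N}]=O(1)$ for the rescaled one-body density matrix; in particular the rescaled $\widetilde\gamma^{(k)}_{\Psi_N}$ are tight and precompact in trace norm. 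The rescaled Hamiltonian is $\sum_j\big(-\Delta_j+\eps^2|x_j|^2-2\Omega\eps L_j\big)-\tfrac{a_N}{N-1}\sum_{i<j}v_N(x_i-x_j)$, a mean-field problem with effective interaction exponent $\beta'<\tfrac12$. Following \cite[Section~3]{Lewin-15}, I would localize $\widetilde\gamma^{(1)}_{\Psi_N}$ onto the spectral subspace $P_\Lambda$ of $-\Delta+\eps^2|x|^2$ below a cutoff $\Lambda$, apply the quantitative finite-dimensional quantum de Finetti theorem of \cite[Lemmas~3.4, 3.6]{LewNamRou-15b} to the localized two-body density matrix, and contract against $v_N$, producing a probability measure $\mu_N$ on the unit sphere of $P_\Lambda L^2(\R^2)$ with
\[
\eps\,E^{\rm Q}_{\Omega,a_N}(N)\ \ge\ \int \cF^{\rm resc}_{\Omega,\eps,N}(v)\,d\mu_N(v)-\mathrm{err}_{\rm loc}(\Lambda)-\mathrm{err}_{\rm dF}(\Lambda)\ \ge\ \eps\,E^{\rm H}_{\Omega,a_N}(N)-\mathrm{err}_{\rm loc}(\Lambda)-\mathrm{err}_{\rm dF}(\Lambda),
\]
where $\mathrm{err}_{\rm loc}(\Lambda)$ (the energy lost to localization) decreases with $\Lambda$, while $\mathrm{err}_{\rm dF}(\Lambda)$ (the de Finetti error) increases with $\Lambda$ and, for a mean-field interaction of exponent $\beta'<\tfrac12$, is a negative power of $N$ which beats the rescaled energy scale $\eps^2=a_*-a_N$ precisely when $\alpha<2(1-2\beta)$. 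Optimizing $\Lambda$ makes both errors $o(\eps^2)$; dividing by $\eps$ and combining with Step~A yields \eqref{eq:eN}.

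\medskip
\noindent\textbf{Main difficulty.} The delicate point is Step~B in the collapse regime: because the intrinsic length scale $(a_*-a_N)^{1/4}\to0$, the one-body density matrix carries a kinetic energy $\sim(a_*-a_N)^{-1/2}\to\infty$ per particle in the original variables, forcing a large localization cutoff, while the effective interaction $v_N$ simultaneously concentrates; it is the tension between these two requirements, as quantified by $\mathrm{err}_{\rm loc}+\mathrm{err}_{\rm dF}$, that pins down the admissible window $\alpha<\min\{\tfrac45\beta,\,2(1-2\beta)\}$, the first inequality coming instead from the Hartree-to-GP smearing estimate of Step~A. A secondary technical issue is that, after rescaling, the rotation term $-2\Omega\sqrt{a_*-a_N}\,L$ is of the same order as the kinetic energy, so the diamagnetic inequality, the smallness of the imaginary part and the gauge change of Section~\ref{sec:proof_GP} cannot be avoided; in the scheme above they enter only through the Hartree problem of Step~A, which keeps the de Finetti estimate of Step~B comparatively clean.
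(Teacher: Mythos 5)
Your upper bound is essentially the paper's: the trial state $Q_N^{\otimes N}$, a smearing estimate of order $N^{-\beta}(a_*-a_N)^{-3/4}=o(\sqrt{a_*-a_N})$ when $\alpha<4\beta/5$ (the paper gets it by a Taylor expansion and H\"older rather than your Fourier/$\dot H^{1/2}$ bound, but the two are interchangeable), and the evaluation $\cE^{\rm GP}_{\Omega,a_N}(Q_N)=\sqrt{a_*-a_N}\,\tfrac{2\lambda_*^2}{a_*}$. Your Step~A lower bound on the Hartree energy is, however, far more laborious than necessary: since $w_N\geq 0$ and $\int w_N=1$, the Cauchy--Schwarz inequality
\begin{equation*}
\iint |u(x)|^2|u(y)|^2 w_N(x-y)\,dx\,dy \leq \tfrac12\iint \big(|u(x)|^4+|u(y)|^4\big)w_N(x-y)\,dx\,dy=\int |u|^4
\end{equation*}
shows that the smeared \emph{attraction} is weaker than the point attraction, so $\cE^{\rm H}_{\Omega,a_N,N}(u)\geq \cE^{\rm GP}_{\Omega,a_N}(u)\geq E^{\rm GP}_\Omega(a_N)$ with no error term at all; there is no need to re-run the blow-up analysis of Section~\ref{sec:proof_GP} for the nonlocal functional.

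The genuine gap is in Step~B. The paper does \emph{not} use the quantum de Finetti theorem for the many-body lower bound: it uses the L\'evy-Leblond/Lieb--Yau argument of~\cite[Section~3]{Lewin-15} to reduce directly to the mixed-state one-body problem~\eqref{eq:ineq_boltzons} with the \emph{explicit} error $CN^{2\beta-1}$, then the Cauchy--Schwarz inequality above to replace $w_N$ by a delta, and finally the concavity Lemma~\ref{lem:mixed_GP} to pass from mixed states to the pure GP minimum (this last step substitutes for the Hoffmann-Ostenhof inequality, which is unavailable in the presence of rotation). The window $\alpha<2(1-2\beta)$ is then nothing but the condition $N^{2\beta-1}=o(N^{-\alpha/2})$. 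Your assertion that a localized de Finetti argument produces errors $\mathrm{err}_{\rm loc}+\mathrm{err}_{\rm dF}=o(\eps^2)$ ``precisely when $\alpha<2(1-2\beta)$'' is not substantiated and is almost certainly not what that route delivers: in the rescaled variables the one-body operator $-\Delta+\eps^2|x|^2-2\Omega\eps L$ has spectral gap $O(\eps)$ and $O\big((\Lambda/\eps)^2\big)$ eigenvalues below a cutoff $\Lambda$, so the finite-dimensional de Finetti error $\propto \dim(P_\Lambda)N^{-1}\|P_\Lambda^{\otimes 2}w_NP_\Lambda^{\otimes 2}\|$ degenerates as $\eps\to0$ while the localization error forces $\Lambda\to\infty$; quantifying this trade-off is exactly the hard part, and the paper remarks that the de Finetti strategy covers a strictly narrower range of $(\alpha,\beta)$ than the argument it actually employs. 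As written, your proof of the crucial lower bound therefore rests on an unproved quantitative claim whose stated conclusion (the exact window~\eqref{eq:better_condition_alpha}) is borrowed from a different mechanism. If you replace Step~B by the L\'evy-Leblond reduction plus the two elementary observations above (Cauchy--Schwarz and concavity), the whole proof closes in a few lines.
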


\begin{proof} We start with the lower bound. From the arguments in~\cite[Section 3]{Lewin-15} and the fact that the Fourier transform of $w_N$ satisfies $|\widehat w_N|\le CN^{2\beta}$ we have
\begin{multline}
E^{\rm Q}_{\Omega,a}(N) \ge \min_{\substack{\gamma=\gamma_*\geq0\\ \tr\gamma=1}}\left\{ \tr\left[ (-\Delta+|x|^2-2\Omega L)\gamma \right]-\frac{a}2\int_{\R^d}\int_{\R^d}w_N(x-y)\rho_\gamma(x)\rho_\gamma(y)\,dx\,dy\right\}\\ - CN^{2\beta-1}.
\label{eq:ineq_boltzons}
\end{multline}
Here the one-body density is defined by writing 
$$\rho_\gamma=\sum_j n_j|u_j|^2$$
with the spectral decompostion $\gamma=\sum_j n_j|u_j\rangle\langle u_j|$ ($0\leq n_j\leq1$ and $\sum_j n_j=1$).

Note that the arguments in~\cite[Section 3]{Lewin-15} contain two ingredients. One is to get a lower bound involving the mean-field interaction following an idea from~\cite{LevyLeblond-69,LieYau-87}, using auxiliary classical particles that repel each other in order to model the attractive part of the interaction.  The other ingredient is to apply the Hoffmann-Ostenhof inequality\footnote{Bounding the full kinetic energy from below by that of the one-body density $\sqrt{\rho_{\Psi_N}}$.} to the kinetic energy in order to get the Hartree energy. Here we cannot use the second part when $\Omega\neq0$. We thus bypass it and only bound the interaction from below. The price to pay is that we end up with the mixed state type Hartree energy on the right side of~\eqref{eq:ineq_boltzons}. 

By the Cauchy-Schwarz inequality, we have
\begin{multline}
\iint |u(x)|^2|u(y)|^2 w_N(x-y) dx dy \leq \frac{1}{2}\iint (|u(x)|^4+ |u(y)|^4) w_N(x-y) dx dy \\
= \int |u(x)|^4 dx
\label{eq:estim_interaction_delta}
\end{multline}
for any $u\in H^1(\R^2)$. Applying this to $u=\sqrt{\rho_\gamma}$, we therefore get 
\begin{equation}
E^{\rm Q}_{\Omega,a}(N) \ge \min_{\substack{\gamma=\gamma_*\geq0\\ \tr\gamma=1}}\left\{\tr\left[(-\Delta+|x|^2-2\Omega L)\gamma\right] -\frac{a}2\int_{\R^d}\rho_\gamma(x)^2\,dx\right\} - CN^{2\beta-1}.
\label{eq:ineq_boltzons_bis}
\end{equation}
Here $\rho_\gamma(x):=\gamma(x,x)$ for every $\gamma=\gamma^*\geq0$ with $\tr(\gamma)=1$. The usual Gross-Pitaevskii energy~\eqref{eq:def_energy} corresponds to $\gamma=|u\rangle\langle u|$, a rank one projection. Now we remark that the minimum on the right of~\eqref{eq:ineq_boltzons_bis} is nothing but the original Gross-Pitaevskii minimum.

\begin{lemma}[\textbf{Mixed Gross-Pitaevskii energy}]\label{lem:mixed_GP}\mbox{}\\
For every $0\leq\Omega<1$ and every $0\leq a<a_*$, we have
\begin{equation}
\min_{\substack{\gamma=\gamma_*\geq0\\ \tr\gamma=1}}\left\{\tr\left[(-\Delta+|x|^2-2\Omega L)\gamma\right] -\frac{a}2\int_{\R^d}\rho_\gamma(x)^2\,dx\right\}=E^{\rm GP}_{\Omega}(a).
\end{equation}
\end{lemma}

\begin{proof}[Proof of Lemma~\ref{lem:mixed_GP}]
This follows from the concavity of the energy with respect to $\gamma$. Indeed, writing $\gamma=\sum_j n_j|u_j\rangle\langle u_j|$ with $0\leq n_j\leq1$ and $\sum_j n_j=1$, we have $\rho_\gamma=\sum_j n_j|u_j|^2$, hence
$$\tr\left[(-\Delta+|x|^2-2\Omega L)\gamma\right] -\frac{a}2\int_{\R^d}\rho_\gamma(x)^2\,dx\geq \sum_j n_j\,\cE^{\rm GP}_{\Omega,a}(u_j)\geq E^{\rm GP}_\Omega(a)$$
as we wanted.
\end{proof}

Inserting in~\eqref{eq:ineq_boltzons_bis} and using the behavior of $E^{\rm GP}_\Omega(a_N)$ proved in Theorem~\ref{thm:GP}, we get the simple lower bound
$$\boxed{E^{\rm Q}_{\Omega,a_N}(N) \ge  E^{\rm GP}_\Omega(a_N)\left(1-CN^{2\beta-1}(a_*-a_N)^{-1/2}\right).}$$
The error term $N^{2\beta-1}(a_*-a_N)^{-1/2}$ goes to zero when $a_*-a_N=N^{-\alpha}$ and $\alpha<2(1-2\beta)$.

Now we turn to the upper bound. By the variational principle 
$$
E^{\rm Q}_{\Omega,a}(N) = \inf_{\Psi\in \gH^N, \|\Psi\|=1} \frac{ \langle \Psi, H_N \Psi \rangle}{N} \le  \inf_{\|u\|_{L^2}=1} \frac{ \langle u^{\otimes N}, H_N u^{\otimes N}\rangle}{N} = E^{\rm H}_{\Omega,a}(N).
$$
We therefore need to bound the Hartree functional from above by the Gross-Pitaevskii functional. Introducing the variable $z=N^\beta(x-y)$, we may write
\begin{align*}
&\iint_{\R^2\times\R^2} |u(x)|^2 N^{2\beta}w(N^\beta (x-y)) |u(y)|^2 dxdy - \left( \int_{\R^2} w\right) \int_{\R^2} |u(x)|^4dx \\ 
&\qquad=  \iint_{\R^2\times\R^2} |u(x)|^2  w(z) \Big( |u(x-N^{-\beta}z)|^2 - |u(x)|^2 \Big) dxdz \\
&\qquad= \iint_{\R^2\times\R^2}  |u(x)|^2  w(z) \Big(  \int_{0}^1 (\nabla |u|^2)(x-tN^{-\beta}z) \cdot ( N^{-\beta} z) dt \Big) dxdz.
\end{align*} 
Using $|\nabla |u|^2| \le 2 |\nabla u|.|u|$ and H\"older's inequality, we find
\begin{align*}
&\int_{\R^2} |u(x)|^2 |(\nabla |u|^2)(x-tN^{-\beta}z)| dx\\
&\qquad\le 2\Big(\int_{\R^2} |u(x)|^6 dx \Big)^{1/3}\Big(\int_{\R^2} |\nabla u(x-tN^{-\beta}z)|^2 dx \Big)^{1/2}\Big(\int_{\R^2} |u(x-tN^{-\beta}z)|^6 dx \Big)^{1/6} \\
& \qquad= 2 \|\nabla u\|_{L^2(\R^2)} \|u\|_{L^6(\R^2)}^3.
\end{align*}
Thus 
\begin{align*}
|\cE^{\rm H}_{\Omega,a_N,N}(u)-\cE^{\rm GP}_{\Omega,a_N}(u)|=&\frac{a_N}{2}\left| \iint_{\R^2\times\R^2} |u(x)|^2 w_N(x-y) -   \int_{\R^2} |u(x)|^4dx \right| \\ 
=& \,\frac{a_N}{2} \left| \iint_{\R^2\times\R^2}  |u(x)|^2  w(z) \Big(  \int_{0}^1 (\nabla |u|^2)(x-tN^{-\beta}z) \cdot ( N^{-\beta} z) dt \Big) dxdz \right| \\
\le&  a_N N^{-\beta}\Big(\int_{\R^2} |w(z)|\;|z| dz \Big) \|\nabla u\|_{L^2(\R^2)} \|u\|_{L^6(\R^2)}^3\\
\le& C N^{-\beta} \|\nabla u\|_{L^2(\R^2)} \|u\|_{L^6(\R^2)}^3.
 \end{align*} 
 Now choosing the trial state $Q_{N}$ as in \eqref{eq:def_Q_N} we find that
\begin{align*}
\cE^{\rm H}_{\Omega,a_N,N}(Q_N) &\le \cE^{\rm GP}_{\Omega,a_N}(Q_N) + C N^{-\beta} \|\nabla Q_{N}\|_{L^2} \|Q_{N}\|_{L^6}^3\\
&= \sqrt{a_*-a_N} \left( \frac{2\lambda^2}{a_*} + CN^{-\beta} (a_*-a_N)^{-\frac{5}{4}}\right).
\end{align*}
The error term goes to zero when $a_*-a_N \ge N^{-\alpha}$ with $\alpha< 4\beta/5$. We have proved the upper bound
$$\boxed{E^{\rm Q}_{\Omega,a_N}(N) \le  E^{\rm GP}_\Omega(a_N)\left(1+CN^{-\beta} (a_*-a_N)^{-\frac{5}{4}}\right)}$$
which concludes the proof of Proposition~\ref{prop:GSE-1}.
\end{proof}

\subsubsection*{\bfseries Step 2. Convergence of reduced density matrices}
This is a Feynman-Hellmann-type argument. 
Let $\eta >0$ be a small parameter to be fixed later on and $A$ be a bounded self-adjoint operator on $L^2 (\R^2)$. Consider the perturbed Hamiltonian
\begin{equation}\label{eq:HN eta}
H_{N,\eta} = \sum_{j=1} ^N  \left( -\Delta_{x_j} + |x_j|^2-2\Omega L_{x_j} + \eta A_j \right) - \frac{a}{N-1} \sum_{1\leq i<j \leq N}w_N(x_i-x_j)
\end{equation}
with ground-state energy per particle denoted $E^{\rm Q}_\eta$ hereafter. Introduce the associated Gross-Pitaevskii energy functional (we drop some $\Omega,a$ subscrits during this proof, for lightness of notation)
\begin{equation}
\cE^{\rm GP}_{\eta}(u)=\int_{\R^2}|\nabla u(x)|^2\,dx+\int_{\R^2}|x|^2|u(x)|^2\,dx-2\Omega\pscal{u,Lu} + \eta\pscal{u,Au} -\frac{a}2\int_{\R^2}|u(x)|^4\,dx.
\label{eq:def_energy eta}
\end{equation}
In what follows we denote by $u_\eta$ a ground state for the latter and $E^{\rm GP}_\eta=\cE^{\rm GP}_\eta(u_\eta)$ the corresponding ground-state energy.

Let $\Psi_N$ be a ground state for $H_N = H_{N,0}$ and $\gamma_{\Psi_N} ^{(1)}$ its one-body reduced density matrix. We write 
\begin{align}\label{eq:variation}
\eta \tr\left[ A \gamma_{\Psi_N} ^{(1)} \right] &= N^{-1}\big(\left\langle \Psi_N | H_{N,\eta} | \Psi_N \right\rangle - \left\langle \Psi_N | H_{N,0} | \Psi_N \right\rangle \big)\nonumber\\
&\geq E^{\rm Q}_\eta - E^{\rm Q}_0\nn\\
&\geq E^{\rm GP}_\eta - E^{\rm GP}_0 + O (N^{2\beta -1}) + O (N^{3\alpha/4 - \beta})\nonumber\\
&\geq \cE ^{\rm GP}_\eta (u_\eta) - \cE ^{\rm GP}_0 (u_\eta) +  O (N^{2\beta -1}) + O (N^{3\alpha/4 - \beta})\nonumber\\
&= \eta \left\langle u_\eta | A | u_\eta \right\rangle +  O (N^{2\beta -1}) + O (N^{3\alpha/4 - \beta}).
\end{align}
The first inequality is the variational principle, the second uses the estimates of the previous step. In that regard, observe that the energy lower bound applies \emph{mutatis mutandis} to the problem perturbed by $\eta A$. The error term in \eqref{eq:ineq_boltzons} solely comes from applying the L\'evy-Leblond method to the interaction as in~\cite[Section~3]{Lewin-15}. It is therefore independent of the one-body term (in particular, of $\eta$ and $A$). Lemma~\ref{lem:mixed_GP} generalizes to the perturbed functional, for the only property of the one-body energy used in its proof is its linearity in the density matrix. The third inequality in~\eqref{eq:variation} is the variational principle again.

Under the assumption that 
$$ \alpha < \min \left( \frac{4\beta}{5}, 2(1-2\beta) \right)$$
one can pick some $\eta=\eta_N\to0$ as $N\to\infty$, such that 
$$ \eta ^{-1} N^{2\beta -1} + \eta ^{-1} N^{3\alpha/4 - \beta} \underset{N\to\infty}{\longrightarrow} 0$$
and also 
$$ \eta =o\big(E^{\mathrm GP}\big)=o\big( \sqrt{a_* - a_N}\big) = o\big(N ^{-\alpha/2}\big).$$
Then, dividing~\eqref{eq:variation} by $\eta$ and repeating the argument with $A$ changed to $-A$ yields 
\begin{equation}\label{eq:almost}
 \left\langle u_\eta |A | u_\eta \right\rangle + o (1)\leq \tr\left[ A \gamma_{\Psi_N} ^{(1)}\right] \leq  \left\langle u_{-\eta} |A | u_{-\eta }\right\rangle + o (1). 
\end{equation}
On the other hand, with the above choice of $\eta$, since 
$$ \cE ^{\rm GP}_0 (u_\eta) = \cE ^{\rm GP}_\eta (u_\eta) + O(\eta \norm{A}) \leq \cE ^{\rm GP}_\eta (u_0) + O(\eta \norm{A}) = E ^{\rm GP}_0 + O(\eta \norm{A}),$$
it follows that $(u_\eta)$ and $(u_{-\eta})$ are sequences of quasi-minimizers for $\cE^{\rm GP}_0$. We may apply Theorem~\ref{thm:GP} to them, and thus both sequences satisfy~\eqref{eq:approx_GS}. Combining with~\eqref{eq:almost}, we get, after a dilation of space variables,  trace-class weak-$\star$ convergence of $\gamma_{\Psi_N} ^{(1)}$ to $|Q_N\rangle \langle Q_N|$. Since no mass is lost in the limit, the convergence must hold in trace-class norm, which gives~\eqref{eq:thm-BEC} for $k=1$. 

To obtain~\eqref{eq:thm-BEC} for $k > 1$, observe that, after dilation, $\gamma_{\Psi_N} ^{(1)}$ converges in trace-class norm to a rank-one operator. It is well-known that this implies convergence of higher order density matrices to tensor powers of the limiting operator (see e.g. the discussion following~\cite[Theorem~7.1]{LieSeiSolYng-05} or~\cite[Corollary~2.4]{LewNamRou-14}). \hfill\qed

\begin{remark}\mbox{}\\
Note that we have been able to obtain the convergence of density matrices from that of the energy by a rather soft argument. What makes this possible is Lemma~\ref{lem:mixed_GP} and the uniqueness of (the limit of) the GP minimizer. Lemma~\ref{lem:mixed_GP} relies strongly on the fact that the interaction is attractive. For repulsive interactions\footnote{There is no blow-up then, but one might want to adapt the method to obtain convergence of density matrices to the stable GP minimizers.}, the argument can be adapted provided the one-body part is positivity-preserving (hence, without rotation), using that the bosonic and boltzonic minimization problems then coincide~\cite[Theorem~3.3]{LieSei-09}.\hfill$\diamond$
\end{remark}

\appendix
\section{Extension to anharmonic potentials}\label{sec:extensions}

The arguments given in this paper can be extended in various directions. One possibility is to consider anharmonic potentials. For completeness we state here the corresponding result when the external potential is chosen in the form
$$V(x)=c_0|x|^s$$
but we expect similar results when $V$ has a unique minimizer and behaves like this in a neighborhood of this point, similarly to what was done in~\cite{GuoSei-14}.
When $s\neq2$ the limit $a\to a_*$ requires to have $\Omega=0$. Although a stronger confinement $s>2$ can control the rotating gas at infinity, it is not sufficient to control rotating effects near the blow up point. So we do not consider any rotation here. The many-particle Hamiltonian then takes the form
\begin{equation}\label{eq:HN_s}
\tilde H_N = \sum_{j=1} ^N  \left( -\Delta_{x_j} + c_0|x_j|^s\right) - \frac{a}{N-1} \sum_{1\leq i<j \leq N}w_N(x_i-x_j).
\end{equation}
The following can be proved by arguing exactly as we did for $s=2$.

\begin{theorem}[\textbf{Collapse and condensation of the many-body ground state for anharmonic potentials}]\label{thm:cv-nls_s}\mbox{}\\
Let $\Omega\equiv0$, $s>0$, $c_0>0$, $0<\beta<1/2$ and $a_N=a_*-N^{-\alpha}$ with 
$$0<\alpha< \min\left\{\frac{ \beta (s+2)}{s+3}, \frac{(1-2\beta)(s+2)}{s}\right\}.$$
Let $\Psi_N$ be the unique ground state of $\tilde H_N$. Then we have 
\begin{align} \label{eq:thm-BEC_s}
\lim_{N \to \infty} \Tr\Big| \gamma_{\Psi_{N} }^{(k)} -  |\tilde Q_N^{\otimes k} \rangle \langle \tilde Q_{N}^{\otimes k}| \Big|=0,
\end{align}
for all $k\in \mathbb{N}$, where $\tilde Q_N$ is the rescaled Gagliardo-Nirenberg optimizer given by
$$\tilde Q_N(x)= (a_*)^{-1/2}\tilde\lambda (a_*-a_N)^{-\frac{1}{2+s}} Q\left( \tilde\lambda (a_*-a_N)^{-\frac{1}{2+s}} x\right),$$
with
$$\tilde\lambda=\left( \frac{s}{2} c_0\int_{\R^2} |x|^s |Q(x)|^2 dx \right)^{\frac{1}{2+s}}.$$
In addition, we have
\begin{equation}
\frac{\min\sigma(\tilde H_N)}{N}= (a_*-a_N)^{\frac{s}{s+2}} \Big( \frac{\tilde{\lambda}^2}{a_*}\frac{s+2}{s} + o(1)\Big).
\label{eq:CV_energy_s}
\end{equation}
\end{theorem}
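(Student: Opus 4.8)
The plan is to rerun the proofs of Theorem~\ref{thm:GP} (in the easier case $\Omega=0$), of Proposition~\ref{prop:GSE-1}, and of the Feynman--Hellmann argument proving Theorem~\ref{thm:cv-nls}, replacing the harmonic potential $|x|^2$ by $c_0|x|^s$ and the blow-up length $(a_*-a)^{1/4}$ by $(a_*-a)^{1/(s+2)}$. Set $\delta=a_*-a$ and rescale $v(x)=\mu\,u(\mu x)$ with $\mu=\delta^{1/(s+2)}$, so that $\cE^{\rm GP}_{0,a}(u)=\mu^{-2}\mathcal{G}_\delta(v)$ with
$$\mathcal{G}_\delta(v)=\int_{\R^2}|\nabla v|^2-\frac{a_*-\delta}{2}\int_{\R^2}|v|^4+\delta\,c_0\int_{\R^2}|x|^s|v|^2.$$
Exactly as in Step~1 of Section~\ref{sec:proof_GP}, the elementary lower bound $\mathcal{G}_\delta(v)\ge\delta\big(c_0\int|x|^s|v|^2+\tfrac12\int|v|^4\big)$ together with the Gagliardo--Nirenberg inequality \eqref{eq:GN} shows that any sequence with $\|v_\delta\|_{L^2}=1$ and $\mathcal{G}_\delta(v_\delta)=O(\delta)$ is bounded in $H^1(\R^2)$ with $|x|^{s/2}v_\delta$ bounded in $L^2(\R^2)$, hence (up to a subsequence and a phase) converges strongly in $H^1(\R^2)$ to some optimizer $Q_{\lambda,X}\in\cQ$.

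For a true minimizer $v_\delta$ one has the Euler--Lagrange equation $-\Delta v_\delta-(a_*-\delta)|v_\delta|^2v_\delta+\delta c_0|x|^s v_\delta+\mu_\delta v_\delta=0$ with $\mu_\delta\to\lambda^2>0$. Since there is no rotation, the analogues of Lemmas~\ref{lem:H_2} and~\ref{lem:decay} are in fact simpler: writing $v_\delta=(-\Delta+\delta c_0|x|^s+\mu_\delta)^{-1}g_\delta$ with $g_\delta=(a_*-\delta)|v_\delta|^2v_\delta$ bounded in $L^2(\R^2)$, the pointwise kernel domination $0\le(-\Delta+V+\mu)^{-1}(x,y)\le(-\Delta+\mu)^{-1}(x,y)$ for $V\ge0$ gives a uniform $L^\infty$ bound, and an Agmon argument (integrating the equation against $e^{\alpha|x|}\overline{v_\delta}$, using $\mu_\delta\to\lambda^2>0$, $|x|^s\ge0$ and the uniform decay of $|v_\delta|$) yields $\int e^{\lambda|x|}|v_\delta|^2\le C$ uniformly, whence $|x|^{s/2}v_\delta\to|x|^{s/2}Q_{\lambda,X}$ in $L^2(\R^2)$ and $v_\delta\to Q_{\lambda,X}$ in $H^1(\R^2)\cap L^\infty(\R^2)$. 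Still because $\Omega=0$, no separate control of the imaginary part of $v_\delta$ is needed: plugging the strong convergence into $\mathcal{G}_\delta(v_\delta)\ge\delta\big(c_0\int|x|^s|v_\delta|^2+\tfrac12\int|v_\delta|^4\big)$ and writing $Q_{\lambda,X}=\lambda Q_*(\lambda(\cdot-X))$ gives
$$\liminf_{\delta\to0}\frac{\mathcal{G}_\delta(v_\delta)}{\delta}\ge c_0\lambda^{-s}\int_{\R^2}|x|^sQ_*^2+\frac{\lambda^2}{2}\int_{\R^2}Q_*^4.$$
The translation parameter $X$ drops out at its optimal value because of the Riesz rearrangement inequality \cite{LieLos-01}: applied to the bounded radial decreasing functions $M-\min(|x|^s,M)$ and $Q_*^2$ and letting $M\to\infty$, it gives $\int|X+y|^sQ_*(y)^2\,dy\ge\int|y|^sQ_*(y)^2\,dy$, with equality only if $X=0$. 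Minimizing the resulting bound over $\lambda$, and using the virial identity $\|Q\|_{L^4}^4=2a_*$ (hence $\int Q_*^4=2/a_*$), one finds the minimizer $\lambda=\tilde\lambda=\big(\tfrac{s}{2}c_0\int|x|^sQ^2\big)^{1/(s+2)}$ and minimal value $\tfrac{\tilde\lambda^2}{a_*}\tfrac{s+2}{s}$, matched from above by the trial state $Q_{\tilde\lambda,0}$; this proves the GP version of \eqref{eq:CV_energy_s} and the uniqueness of the limiting profile. The extension to approximate ground states is verbatim that of Step~5 of Section~\ref{sec:proof_GP} (Step~6 being unneeded since $\Omega=0$).

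The many-body part then follows the proofs of Proposition~\ref{prop:GSE-1} and of Theorem~\ref{thm:cv-nls} with the exponents updated. For the lower bound, the L\'evy-Leblond/Lieb--Yau bound of \cite[Section~3]{Lewin-15} (using $|\widehat w_N|\le CN^{2\beta}$), followed by the Cauchy--Schwarz step \eqref{eq:estim_interaction_delta} that replaces $w_N$ by $\delta_0$, and by the concavity Lemma~\ref{lem:mixed_GP} (whose proof uses only linearity of the one-body term, hence applies verbatim to $-\Delta+c_0|x|^s$), give $\tfrac1N\min\sigma(\tilde H_N)\ge E^{\rm GP}(a_N)\big(1-CN^{2\beta-1}(a_*-a_N)^{-s/(s+2)}\big)$, with vanishing error precisely when $\alpha<(1-2\beta)(s+2)/s$. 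For the upper bound, $\tfrac1N\min\sigma(\tilde H_N)\le\cE^{\rm H}_{N}(\tilde Q_N)$ and the potential-independent estimate $|\cE^{\rm H}_N(u)-\cE^{\rm GP}_{0,a_N}(u)|\le CN^{-\beta}\|\nabla u\|_{L^2}\|u\|_{L^6}^3$ applied to $u=\tilde Q_N$, for which $\|\nabla\tilde Q_N\|_{L^2}\|\tilde Q_N\|_{L^6}^3\propto(a_*-a_N)^{-3/(s+2)}=N^{3\alpha/(s+2)}$, gives an error $o(E^{\rm GP}(a_N))$ precisely when $\alpha<\beta(s+2)/(s+3)$; this proves \eqref{eq:CV_energy_s}. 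Finally, the Feynman--Hellmann argument \eqref{eq:variation} applies \emph{mutatis mutandis}: perturbing $\tilde H_N$ by $\eta\sum_jA_j$ one gets $\eta\,\tr[A\gamma^{(1)}_{\Psi_N}]\ge\cE^{\rm GP}_\eta(u_\eta)-\cE^{\rm GP}_0(u_\eta)+O(N^{2\beta-1})+O(N^{3\alpha/(s+2)-\beta})$ and the symmetric bound with $-A$; under the two conditions on $\alpha$ one may choose $\eta=\eta_N\to0$ with $\eta^{-1}\big(N^{2\beta-1}+N^{3\alpha/(s+2)-\beta}\big)\to0$ and $\eta=o\big((a_*-a_N)^{s/(s+2)}\big)$, so that $u_{\pm\eta_N}$ are quasi-minimizers of $\cE^{\rm GP}_{0,a_N}$ and the GP part above forces their rescaled convergence to $Q_*$; hence $\gamma^{(1)}_{\Psi_N}$ converges weak-$\star$ and then, no mass being lost, in trace norm to $|\tilde Q_N\rangle\langle\tilde Q_N|$, and the $\gamma^{(k)}_{\Psi_N}$ for $k>1$ follow by the usual argument.

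The main obstacle is the Gross--Pitaevskii step, and within it the identification $X=0$, $\lambda=\tilde\lambda$ for general $s$: the Riesz-rearrangement trick above settles the translation, but one must also check that the mild non-smoothness of $|x|^s$ at the origin when $0<s<2$, and its slower-than-quadratic (resp. faster) growth, do not spoil the uniform $H^2_{\rm loc}$/$L^\infty$/decay estimates underlying the strong $H^1$ and $L^\infty$ convergence; since $|x|^s\in L^\infty_{\rm loc}(\R^2)$ is nonnegative and confining, this is routine. Everything downstream is a purely formal substitution of exponents, which is why the conditions on $\alpha$ reduce to those of Theorem~\ref{thm:cv-nls} when $s=2$.
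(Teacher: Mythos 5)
Your proposal is correct and follows exactly the route the paper intends: the appendix gives no separate proof and simply asserts that one argues as for $s=2$, which is what you do, with the exponents $(a_*-a)^{1/(s+2)}$, the error rates $N^{2\beta-1}(a_*-a_N)^{-s/(s+2)}$ and $N^{-\beta}(a_*-a_N)^{-(s+3)/(s+2)}$, and the optimization over $\lambda$ all checking out against the stated $\tilde\lambda$, the constant $\frac{\tilde\lambda^2}{a_*}\frac{s+2}{s}$, and the two conditions on $\alpha$. The one step that is not a purely formal substitution --- identifying $X=0$ when $|x-X|^s$ can no longer be expanded as for $s=2$ --- you handle correctly via the rearrangement inequality applied to the truncations $M-\min(|x|^s,M)$, which is a legitimate (and needed) addition covering all $s>0$, including $0<s<1$ where convexity of $|x|^s$ fails.
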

The right side of~\eqref{eq:CV_energy_s} is of course the expansion of the Gross-Pitaevskii energy, which has already been derived in~\cite{GuoSei-14}.

%%%%%%%%%%%%%%%%%%%%%%%%%%%%%%%%%%%%%%%%%%
%%%%%%%%%%%%%%%%%%%%%%%%%%%%%%%%%%%%%%%%%%
% \bibliographystyle{siam}
% \bibliography{biblio}

\begin{thebibliography}{10}

\bibitem{Aftalion-06}
{\sc A.~Aftalion}, {\em Vortices in {B}ose--{E}instein Condensates}, vol.~67 of
  Progress in nonlinear differential equations and their applications,
  Springer, 2006.

\bibitem{Agmon}
{\sc S.~Agmon}, {\em Lectures on exponential decay of solutions of second-order
  elliptic equations}, Princeton University Press, 1982.

\bibitem{CorWie-95}
{\sc M.~H. Anderson, J.~R. Ensher, M.~R. Matthews, C.~E. Wieman, and E.~A.
  Cornell}, {\em Observation of {B}ose-{E}instein condensation in a dilute
  atomic vapor}, Science, 269 (1995), pp.~198--201.

\bibitem{BaoCai-13}
{\sc W.~Bao and Y.~Cai}, {\em Mathematical theory and numerical methods for
  {B}ose-{E}instein condensation}, Kinet. Relat. Models, 6 (2013), pp.~1--135.

\bibitem{BayPet-96}
{\sc G.~Baym and C.~J. Pethick}, {\em Ground-state properties of magnetically
  trapped {B}ose-condensed {R}ubidium gas}, Phys. Rev. Lett., 76 (1996),
  pp.~6--9.

\bibitem{BraSacTolHul-95}
{\sc C.~C. Bradley, C.~A. Sackett, J.~J. Tollett, and R.~G. Hulet}, {\em
  {Evidence of Bose-Einstein Condensation in an Atomic Gas with Attractive
  Interactions}}, Phys. Rev. Lett., 75 (1995), pp.~1687--1690.

\bibitem{CagLioMarPul-92}
{\sc E.~Caglioti, P.-L. Lions, C.~Marchioro, and M.~Pulvirenti}, {\em A special
  class of stationary flows for two-dimensional {E}uler equations: a
  statistical mechanics description}, Comm. Math. Phys., 143 (1992),
  pp.~501--525.

\bibitem{Carles-02}
{\sc R.~Carles}, {\em Critical nonlinear {S}chr\"odinger equations with and
  without harmonic potential}, Math. Models Methods Appl. Sci., 12 (2002),
  pp.~1513--1523.

\bibitem{CarCla-06}
{\sc L.~D. Carr and C.~W. Clark}, {\em {Vortices in Attractive Bose-Einstein
  Condensates in Two Dimensions}}, Phys. Rev. Lett., 97 (2006), p.~010403.

\bibitem{ChaGusNakTsa-08}
{\sc S.-M. Chang, S.~Gustafson, K.~Nakanishi, and T.-P. Tsai}, {\em Spectra of
  linearized operators for {NLS} solitary waves}, SIAM Journal on Mathematical
  Analysis, 39 (2008), pp.~1070--1111.

\bibitem{CheHol-17}
{\sc X.~Chen and J.~Holmer}, {\em The rigorous derivation of the 2{D} cubic
  focusing {NLS} from quantum many-body evolution}, Int. Math. Res. Not. IMRN,
  (2017), pp.~4173--4216.

\bibitem{Chiribella-11}
{\sc G.~Chiribella}, {\em On quantum estimation, quantum cloning and finite
  quantum de {F}inetti theorems}, in Theory of Quantum Computation,
  Communication, and Cryptography, vol.~6519 of Lecture Notes in Computer
  Science, Springer, 2011.

\bibitem{ChrKonMitRen-07}
{\sc M.~Christandl, R.~K{\"o}nig, G.~Mitchison, and R.~Renner}, {\em
  One-and-a-half quantum de {F}inetti theorems}, Comm. Math. Phys., 273 (2007),
  pp.~473--498.

\bibitem{ColLunSuo-05}
{\sc A.~Collin, E.~Lundh, and K.-A. Suominen}, {\em Center-of-mass rotation and
  vortices in an attractive {B}ose gas}, Phys. Rev. A, 71 (2005), p.~023613.

\bibitem{Cooper-08}
{\sc N.~R. {Cooper}}, {\em {Rapidly rotating atomic gases}}, Advances in
  Physics, 57 (2008), pp.~539--616.

\bibitem{CorWie-02}
{\sc E.~A. Cornell and C.~E. Wieman}, {\em {Bose-Einstein condensation in a
  dilute gas, the first 70 years and some recent experiments}}, Rev. Mod.
  Phys., 74 (2002), pp.~875--893.

\bibitem{CorPinRouYng-11b}
{\sc M.~Correggi, F.~Pinsker, N.~Rougerie, and J.~Yngvason}, {\em Rotating
  superfluids in anharmonic traps: From vortex lattices to giant vortices},
  Phys. Rev. A, 84 (2011), p.~053614.

\bibitem{DalStr-96}
{\sc F.~Dalfovo and S.~Stringari}, {\em Bosons in anisotropic traps: Ground
  state and vortices}, Phys. Rev. A, 53 (1996), pp.~2477--2485.

\bibitem{Ketterle-95}
{\sc K.~B. Davis, M.~O. Mewes, M.~R. Andrews, N.~J. van Druten, D.~S. Durfee,
  D.~M. Kurn, and W.~Ketterle}, {\em {B}ose-{E}instein {C}ondensation in a
  {G}as of {S}odium {A}toms}, Phys. Rev. Lett., 75 (1995), pp.~3969--3973.

\bibitem{DeFinetti-31}
{\sc B.~{de Finetti}}, {\em Funzione caratteristica di un fenomeno aleatorio}.
\newblock Atti della R. Accademia Nazionale dei Lincei, 1931.
\newblock Ser. 6, Memorie, Classe di Scienze Fisiche, Matematiche e Naturali.

\bibitem{DeFinetti-37}
\leavevmode\vrule height 2pt depth -1.6pt width 23pt, {\em La pr\'evision : ses
  lois logiques, ses sources subjectives}, Ann. Inst. H. Poincar\'e, 7 (1937),
  pp.~1--68.

\bibitem{DenGuoLu-15}
{\sc Y.~Deng, Y.~Guo, and L.~Lu}, {\em On the collapse and concentration of
  {B}ose-{E}instein condensates with inhomogeneous attractive interactions},
  Calc. Var. Partial Differential Equations, 54 (2015), pp.~99--118.

\bibitem{DiaFre-80}
{\sc P.~Diaconis and D.~Freedman}, {\em Finite exchangeable sequences}, Ann.
  Probab., 8 (1980), pp.~745--764.

\bibitem{DonClaCorRobCorWie-01}
{\sc E.~A. Donley, N.~R. Claussen, S.~L. Cornish, J.~L. Roberts, E.~A. Cornell,
  and C.~E. Wieman}, {\em {Dynamics of collapsing and exploding Bose--Einstein
  condensates}}, Nature, 412 (2001), pp.~295--299.

\bibitem{FanSpoVer-80}
{\sc M.~Fannes, H.~Spohn, and A.~Verbeure}, {\em Equilibrium states for mean
  field models}, J. Math. Phys., 21 (1980), pp.~355--358.

\bibitem{FanVan-06}
{\sc M.~Fannes and C.~Vandenplas}, {\em Finite size mean-field models}, J.
  Phys. A, 39 (2006), pp.~13843--13860.

\bibitem{Fetter-09}
{\sc A.~Fetter}, {\em Rotating trapped {B}ose-{E}instein condensates}, Rev.
  Mod. Phys., 81 (2009), p.~647.

\bibitem{Frank-13}
{\sc R.~L. Frank}, {\em Ground states of semi-linear {PDE}}.
\newblock Lecture notes from the ``Summerschool on Current Topics in
  Mathematical Physics", CIRM Marseille, Sept. 2013., 2013.

\bibitem{GerStrProHul-00}
{\sc J.~M. Gerton, D.~Strekalov, I.~Prodan, and R.~G. Hulet}, {\em Direct
  observation of growth and collapse of a bose--einstein condensate with
  attractive interactions}, Nature, 408 (2000), pp.~692--695.

\bibitem{GuoSei-14}
{\sc Y.~Guo and R.~Seiringer}, {\em On the mass concentration for
  {B}ose-{E}instein condensates with attractive interactions}, Lett. Math.
  Phys., 104 (2014), pp.~141--156.

\bibitem{GuoZenZho-16}
{\sc Y.~Guo, X.~Zeng, and H.-S. Zhou}, {\em Energy estimates and symmetry
  breaking in attractive {B}ose-{E}instein condensates with ring-shaped
  potentials}, Ann. Inst. H. Poincar\'e Anal. Non Lin\'eaire, 33 (2016),
  pp.~809--828.

\bibitem{Harrow-13}
{\sc A.~Harrow}, {\em The church of the symmetric subspace}, preprint arXiv,
  (2013).

\bibitem{HewSav-55}
{\sc E.~Hewitt and L.~J. Savage}, {\em Symmetric measures on {C}artesian
  products}, Trans. Amer. Math. Soc., 80 (1955), pp.~470--501.

\bibitem{HudMoo-75}
{\sc R.~L. Hudson and G.~R. Moody}, {\em Locally normal symmetric states and an
  analogue of de {F}inetti's theorem}, Z. Wahrscheinlichkeitstheor. und Verw.
  Gebiete, 33 (1975/76), pp.~343--351.

\bibitem{JebPic-17}
{\sc M.~Jeblick and P.~Pickl}, {\em Derivation of the time dependent two
  dimensional focusing {NLS} equation}, ArXiv e-prints 1707.06523,  (2017).

\bibitem{Ketterle-02}
{\sc W.~Ketterle}, {\em {When atoms behave as waves: Bose-Einstein condensation
  and the atom laser}}, Rev. Mod. Phys., 74 (2002), pp.~1131--1151.

\bibitem{Kiessling-93}
{\sc M.~K.-H. Kiessling}, {\em Statistical mechanics of classical particles
  with logarithmic interactions}, Comm. Pure. Appl. Math., 46 (1993),
  pp.~27--56.

\bibitem{KieSpo-99}
{\sc M.~K.-H. Kiessling and H.~Spohn}, {\em A note on the eigenvalue density of
  random matrices}, Comm. Math. Phys., 199 (1999), pp.~683--695.

\bibitem{KonRen-05}
{\sc R.~K\"{o}nig and R.~Renner}, {\em A de {F}inetti representation for finite
  symmetric quantum states}, J. Math. Phys., 46 (2005), p.~122108.

\bibitem{LevyLeblond-69}
{\sc J.-M. {L{\'e}vy-Leblond}}, {\em {Nonsaturation of Gravitational Forces}},
  J. Math. Phys., 10 (1969), pp.~806--812.

\bibitem{Lewin-15}
{\sc M.~Lewin}, {\em Mean-field limit of {B}ose systems: rigorous results}, in
  Proceedings of the International Congress of Mathematical Physics, 2015.
\newblock ArXiv e-prints.

\bibitem{LewNamRou-14}
{\sc M.~Lewin, P.~T. Nam, and N.~Rougerie}, {\em Derivation of {H}artree's
  theory for generic mean-field {B}ose systems}, Adv. Math., 254 (2014),
  pp.~570--621.

\bibitem{LewNamRou-15}
\leavevmode\vrule height 2pt depth -1.6pt width 23pt, {\em Derivation of
  nonlinear {G}ibbs measures from many-body quantum mechanics}, J. \'Ec.
  polytech. Math., 2 (2015), pp.~65--115.

\bibitem{LewNamRou-15b}
\leavevmode\vrule height 2pt depth -1.6pt width 23pt, {\em Remarks on the
  quantum de {F}inetti theorem for bosonic systems}, Appl. Math. Res. Express
  (AMRX), 2015 (2015), pp.~48--63.

\bibitem{LewNamRou-16c}
\leavevmode\vrule height 2pt depth -1.6pt width 23pt, {\em The mean-field
  approximation and the non-linear {S}chr\"odinger functional for trapped
  {B}ose gases}, Trans. Amer. Math. Soc, 368 (2016), pp.~6131--6157.

\bibitem{LewNamSerSol-15}
{\sc M.~Lewin, P.~T. Nam, S.~Serfaty, and J.~P. Solovej}, {\em Bogoliubov
  spectrum of interacting {B}ose gases}, Comm. Pure Appl. Math., 68 (2015),
  pp.~413--471.

\bibitem{LewNamRou-17b}
{\sc M.~{Lewin}, P.~{Th{\`a}nh Nam}, and N.~{Rougerie}}, {\em {A note on 2D
  focusing many-boson systems}}, Proc. Amer. Math. Soc., 145 (2017),
  pp.~2441--2454.

\bibitem{LieLos-01}
{\sc E.~H. Lieb and M.~Loss}, {\em Analysis}, vol.~14 of Graduate Studies in
  Mathematics, American Mathematical Society, Providence, RI, 2nd~ed., 2001.

\bibitem{LieSei-09}
{\sc E.~H. Lieb and R.~Seiringer}, {\em The {S}tability of {M}atter in
  {Q}uantum {M}echanics}, Cambridge Univ. Press, 2010.

\bibitem{LieSeiSolYng-05}
{\sc E.~H. Lieb, R.~Seiringer, J.~P. Solovej, and J.~Yngvason}, {\em The
  mathematics of the {B}ose gas and its condensation}, Oberwolfach {S}eminars,
  Birkh{\"a}user, 2005.

\bibitem{LieYau-87}
{\sc E.~H. Lieb and H.-T. Yau}, {\em The {C}handrasekhar theory of stellar
  collapse as the limit of quantum mechanics}, Commun. Math. Phys., 112 (1987),
  pp.~147--174.

\bibitem{LunColSuo-04}
{\sc E.~Lundh, A.~Collin, and K.-A. Suominen}, {\em Rotational states of {B}ose
  gases with attractive interactions in anharmonic traps}, Phys. Rev. Lett., 92
  (2004), p.~070401.

\bibitem{Maeda-10}
{\sc M.~Maeda}, {\em On the symmetry of the ground states of nonlinear
  {S}chrödinger equation with potential}, Adv. Nonlinear Stud., 10 (2010),
  pp.~895--925.

\bibitem{McLeod-93}
{\sc K.~McLeod}, {\em Uniqueness of positive radial solutions of {$\Delta
  u+f(u)=0$} in {${\bf R}^n$}. {II}}, Trans. Amer. Math. Soc., 339 (1993),
  pp.~495--505.

\bibitem{MesSpo-82}
{\sc J.~Messer and H.~Spohn}, {\em Statistical mechanics of the isothermal
  {L}ane-{E}mden equation}, J. Statist. Phys., 29 (1982), pp.~561--578.

\bibitem{Mottelson-99}
{\sc B.~Mottelson}, {\em Yrast spectra of weakly interacting {B}ose-{E}instein
  condensates}, Phys. Rev. Lett., 83 (1999), pp.~2695--2698.

\bibitem{MueBay-00}
{\sc E.~J. Mueller and G.~Baym}, {\em Finite-temperature collapse of a {B}ose
  gas with attractive interactions}, Phys. Rev. A, 62 (2000), p.~053605.

\bibitem{NamNap-17}
{\sc P.~Nam and M.~Napi\'orkowski}, {\em Norm approximation for many-body
  quantum dynamics: focusing case in low dimensions}, ArXiv e-prints
  1710.09684,  (2017).

\bibitem{NamRouSei-15}
{\sc P.~T. Nam, N.~Rougerie, and R.~Seiringer}, {\em {Ground states of large
  Bose systems: The Gross-Pitaevskii limit revisited}}, ArXiv e-prints,
  (2015).

\bibitem{PetPit-00}
{\sc C.~J. Pethick and L.~P. Pitaevskii}, {\em {Criterion for Bose-Einstein
  condensation for particles in traps}}, Phys. Rev. A, 62 (2000), p.~033609.

\bibitem{PetRagVer-89}
{\sc D.~Petz, G.~A. Raggio, and A.~Verbeure}, {\em Asymptotics of
  {V}aradhan-type and the {G}ibbs variational principle}, Comm. Math. Phys.,
  121 (1989), pp.~271--282.

\bibitem{Thanh-17}
{\sc T.~V. Phan}, {\em Blow-up profile of {B}ose-{E}instein condensate with
  singular potentials}, J. Math. Phys., 58 (2017), pp.~072301, 10.

\bibitem{RagWer-89}
{\sc G.~A. Raggio and R.~F. Werner}, {\em Quantum statistical mechanics of
  general mean field systems}, Helv. Phys. Acta, 62 (1989), pp.~980--1003.

\bibitem{Rougerie-15}
{\sc N.~{Rougerie}}, {\em {De Finetti theorems, mean-field limits and
  {B}ose-{E}instein condensation}}, ArXiv e-prints,  (2015).

\bibitem{Rougerie-hdr}
{\sc N.~Rougerie}, {\em Some contributions to many-body quantum mathematics}.
\newblock Habilitation thesis, Universit\'e de Grenoble-Alpes, 2016.
\newblock arXiv:1607.03833.

\bibitem{SaiUed-04}
{\sc H.~Saito and M.~Ueda}, {\em Split-merge cycle, fragmented collapse, and
  vortex disintegration in rotating bose-einstein condensates with attractive
  interactions}, Phys. Rev. A, 69 (2004), p.~013604.

\bibitem{SakMal-08}
{\sc H.~Sakaguchi and B.~A. Malomed}, {\em Localized matter-wave patterns with
  attractive interaction in rotating potentials}, Phys. Rev. A, 78 (2008),
  p.~063606.

\bibitem{Spohn-81}
{\sc H.~Spohn}, {\em On the {V}lasov hierarchy}, Math. Methods Appl. Sci., 3
  (1981), pp.~445--455.

\bibitem{Stormer-69}
{\sc E.~St{\o}rmer}, {\em Symmetric states of infinite tensor products of
  {$C^{\ast} $}-algebras}, J. Functional Analysis, 3 (1969), pp.~48--68.

\bibitem{UedLeg-98}
{\sc M.~Ueda and A.~J. Leggett}, {\em Macroscopic quantum tunneling of a
  bose-einstein condensate with attractive interaction}, Phys. Rev. Lett., 80
  (1998), pp.~1576--1579.

\bibitem{VdBLewPul-88}
{\sc M.~van~den Berg, J.~T. Lewis, and J.~V. Pul{\`e}}, {\em The large
  deviation principle and some models of an interacting boson gas}, Comm. Math.
  Phys., 118 (1988), pp.~61--85.

\bibitem{Weinstein-83}
{\sc M.~I. Weinstein}, {\em Nonlinear {S}chrödinger equations and sharp
  interpolation estimates}, Comm. Math. Phys., 87 (1983), pp.~567--576.

\bibitem{Weinstein-85}
{\sc M.~I. Weinstein}, {\em Modulational stability of ground states of
  nonlinear {S}chr\"odinger equations}, SIAM J. Math. Anal., 16 (1985),
  pp.~472--491.

\bibitem{Werner-92}
{\sc R.~F. Werner}, {\em Large deviations and mean-field quantum systems}, in
  Quantum probability \& related topics, QP-PQ, VII, World Sci. Publ., River
  Edge, NJ, 1992, pp.~349--381.

\bibitem{WilGunSmi-98}
{\sc N.~K. Wilkin, J.~M.~F. Gunn, and R.~A. Smith}, {\em {Do Attractive Bosons
  Condense?}}, Phys. Rev. Lett., 80 (1998), p.~2265.

\bibitem{Zhang-00}
{\sc J.~Zhang}, {\em Stability of attractive {B}ose-{E}instein condensates}, J.
  Statist. Phys., 101 (2000), pp.~731--746.

\bibitem{Zhang-05}
\leavevmode\vrule height 2pt depth -1.6pt width 23pt, {\em Sharp threshold for
  blowup and global existence in nonlinear {S}chr\"odinger equations under a
  harmonic potential}, Comm. Partial Differential Equations, 30 (2005),
  pp.~1429--1443.

\end{thebibliography}

\end{document}